\numberwithin{equation}{section}
\newtheorem{theorem}{Theorem}[section]
\newtheorem*{theorem*}{Theorem}
\newtheorem{proposition}[theorem]{Proposition}
\newtheorem{lemma}[theorem]{Lemma}
\newtheorem{Definition}[theorem]{Definition}
\newenvironment{definition}{\begin{Definition}\rm}{\end{Definition}}
\newtheorem{Remark}[theorem]{Remark}
\newtheorem{RHproblem}[theorem]{RH problem}
\newtheorem{Example}[theorem]{Example}
\newcommand{\C}{\mathbb{C}}
\newcommand{\N}{\mathbb{N}}
\newcommand{\R}{\mathbb{R}}
\def\det{\mathop{\mathrm{det}}\nolimits}
\renewcommand{\tilde}{\widetilde}
\begin{document}
\title{Asymptotic Approximate Fekete Arrays}
\author{T. Bloom, L. Bos and N. Levenberg}
\date{}

\maketitle

\begin{abstract}
The notion of asymptotic Fekete arrays, arrays of points in a compact set $K\subset \C^d$ which behave asymptotically like Fekete arrays, has been well-studied, albeit much more recently in dimensions $d>1$. Here we show that one can allow a more flexible definition where the points in the array need not lie in $K$. Our results, which work in the general setting of weighted pluripotential theory, rely heavily, in the multidimensional setting, on the ground-breaking work of Berman, Boucksom and Nystrom from \cite{BBN}.
\end{abstract}

\bigskip

\section{Introduction}  Let $K$ be a compact set in $\C$. For $n=1,2,...$ 
$$\delta_n(K):=\max_{z_0,...,z_n\in K}\prod_{j<k}|z_j-z_k|^{1/{n +1\choose 2}}$$
is called the $n-th$ order diameter of $K$. Note that $$VDM(z_0,...,z_n)=\det [z_i^{j}]_{i,j=0,1,...,n}=\prod_{j<k}(z_j-z_k)$$
$$=\det\left[
\begin{array}{ccccc}
 1 &z_0 &\ldots  &z_0^n\\
  \vdots  & \vdots & \ddots  & \vdots \\
1 &z_n &\ldots  &z_n^n
\end{array}
\right] $$
is a classical Vandermonde determinant; the basis monomials $1,z,...,z^n$ for the space of polynomials of degree at most $n$ are evaluated at the points $z_0,...,z_n$. The sequence of numbers $\{\delta_n(K)\}$ is decreasing and hence the limit 
	\begin{equation} \lim_{n\to \infty} \bigl[\max_{\lambda_i\in K}|VDM(\lambda_0,...,\lambda_n)\bigr] ^{1/{n+1 \choose 2}}:=\delta(K) \end{equation} 
	exists and is called the transfinite diameter of $K$. Points  $z_{n0},...,z_{nn}\in K$ for which $$|VDM(z_{n0},...z_{nn})|= |\det
\left[
\begin{array}{ccccc}
 1 &z_{n0} &\ldots  &z_{n0}^n\\
  \vdots  & \vdots & \ddots  & \vdots \\
1 &z_{nn} &\ldots  &z_{nn}^n
\end{array}
\right]| $$
 is maximal are called Fekete points of order $n$ for $K$. For such Fekete arrays $\{z_{nj}\}_{j=0,...,n; \ n=1,2,...}$, it is classical that if $K$ is not polar, then 
 \begin{equation}\label{feklim} \mu_n:=\frac{1}{n+1}\sum_{j=0}^n \delta_{z_{nj}}\to \mu_K \ \hbox{weak-*}\end{equation}
where $\mu_K$ is the equilibrium measure for $K$; i.e., the probability measure on $K$ of minimal logarithmic energy. In fact, for {\it asymptotically Fekete arrays} $\{z_{nj}\}_{j=0,...,n; \ n=1,2,...}\subset K$, i.e., such that
\begin{equation} \label{avdm} \lim_{n\to \infty} |VDM(z_{n0},...,z_{nn})|^{1/{n+1 \choose 2}}=\delta(K), \end{equation} 
(\ref{feklim}) holds. 

There are higher dimensional analogues of asymptotically Fekete arrays and equilibrium measures associated to nonpluripolar compact sets $K$ in $\C^d, \ d>1$; cf., \cite{zah} for the definition and properties of transfinite diameter in this setting. A striking generalization of (\ref{feklim}) was achieved by Berman, Boucksom and Nystrom in \cite{BBN}. There the authors proved a much more general result, one which, in the setting of the current work, requires {\it weighted} pluripotential theory. In both the univariate and multivariate cases, one regularly encounters situations where ``near extremal'' arrays contain points lying {\it outside} of $K$; cf., \cite{BE} and \cite{Pr}. It is the purpose of this note to show that, by suitably modifying the arguments in \cite{BBN}, one still recovers the appropriate generalization of (\ref{feklim}), even in the weighted setting.

In the next section, we repeat a standard argument from \cite{blchrist} to deal with the weighted, univariate case. Section 3 gives the necessary definitions and background in weighted pluripotential theory, and section 4 follows the strategy in \cite{BBN} to prove the main result on asymptotic, approximate weighted Fekete arrays in $\C^d, \ d>1$. Using the notation and terminology from those sections, here is the statement:

\begin{theorem} \label{maint} Let $K\subset \C^d$ be compact and nonpluripolar, let $\{K_n\}$ be a sequence of compact sets which decrease to $K$; i.e., $K_{n+1}\subset K_n$ for all $K$ and 
$K =\cap_n K_n$, and let $w$ be an admissible weight function on $K_n$ for $n\geq n_0$ whose restriction to $K$ is admissible. For each $n$, take points $x_1^{(n)},x_2^{(n)},\cdots,x_{m_n}^{(n)}\in K_n$ for which 
\begin{equation}\label{wam}
 \lim_{n\to \infty}\bigl[|VDM(x_1^{(n)},\cdots,x_{m_n}^{(n)})|w(x_1^{(n)})^nw(x_2^{(n)})^n\cdots w(x_{m_n}^{(n)})^n\bigr]^{\frac{1}{l_n}}=\delta^w(K)
\end{equation}
and let $\mu_n:= \frac{1}{m_n}\sum_{j=1}^{m_n} \delta_{x_j^{(n)}}$. Then
$$
\mu_n \to \mu_{K,Q}:=(dd^cV_{K,Q}^*)^d \ \hbox{weak}-*.
$$
\end{theorem}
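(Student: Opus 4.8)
The plan is to follow the strategy of \cite{BBN}. Since $K_n\subset K_{n_0}$ for $n\ge n_0$, every $\mu_n$ is a probability measure carried by the fixed compact set $K_{n_0}$, so $\{\mu_n\}$ is relatively compact in the weak-$*$ topology and it suffices to show that every weak-$*$ limit point equals $\mu_{K,Q}$. Fix a subsequence with $\mu_{n_k}\to\nu$; the goal becomes $\int v\,d\nu=\int v\,d\mu_{K,Q}$ for an arbitrary $v\in C(K_{n_0})$. For $t\in\R$ put $w_t:=w\,e^{-tv}$; since $e^{-tv}$ is bounded between positive constants on each compact set, $w_t$ is admissible on $K_n$ for $n\ge n_0$ and its restriction to $K$ is admissible. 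For the chosen points,
\[
\bigl[\,|VDM(x_1^{(n)},\dots,x_{m_n}^{(n)})|\,w_t(x_1^{(n)})^n\cdots w_t(x_{m_n}^{(n)})^n\,\bigr]^{1/l_n}
=\bigl[\,|VDM|\,w(x_1^{(n)})^n\cdots w(x_{m_n}^{(n)})^n\,\bigr]^{1/l_n}\exp\!\Bigl(-t\,\tfrac{n m_n}{l_n}\int v\,d\mu_n\Bigr),
\]
because $\tfrac1{l_n}\sum_{j=1}^{m_n}v(x_j^{(n)})=\tfrac{m_n}{l_n}\int v\,d\mu_n$, and $\tfrac{n m_n}{l_n}\to c_d$, the usual dimensional constant ($c_1=2$).

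Next I would bound the left-hand side above and pass to the limit. As the $x_j^{(n)}$ lie in $K_n$, the bracket on the left is $\le\max_{y_j\in K_n}|VDM(y_1,\dots,y_{m_n})|\,w_t(y_1)^n\cdots w_t(y_{m_n})^n$; raising to the power $1/l_n$ and using the hypothesis $K_{n+1}\subset K_n$, $\cap_n K_n=K$, this maximum tends to $\delta^{w_t}(K)$ as $n\to\infty$. Combining this with \eqref{wam} and $\mu_{n_k}\to\nu$, and taking $\liminf$ along $n_k$ in the displayed identity, yields
\[
\delta^w(K)\,\exp\!\Bigl(-c_d\,t\int v\,d\nu\Bigr)\ \le\ \delta^{w_t}(K)\qquad\text{for every }t\in\R .
\]
At $t=0$ both sides equal $\delta^w(K)$, so $g(t):=\log\delta^{w_t}(K)-\log\delta^w(K)+c_d\,t\int v\,d\nu$ is nonnegative and vanishes at $t=0$; hence it has a global minimum at $0$, and if it is differentiable there then $g'(0)=0$.

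The differentiability of $t\mapsto\log\delta^{w_t}(K)$ at $0$, together with the identification of the derivative as $-c_d\int v\,d\mu_{K,Q}$, is exactly the weighted, $\C^d$ analogue of the central result of \cite{BBN}, established in Section~4 with the sets $K_n$. Granting this, $0=g'(0)=c_d\int v\,d\nu-c_d\int v\,d\mu_{K,Q}$, so $\int v\,d\nu=\int v\,d\mu_{K,Q}$; since $v\in C(K_{n_0})$ was arbitrary, $\nu=\mu_{K,Q}$, and since the subsequence was arbitrary, $\mu_n\to\mu_{K,Q}$ weak-$*$.

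Two ingredients carry the real weight. The first is the assertion that letting the points roam in the shrinking sets $K_n$ does not inflate the limiting weighted diameter: $\bigl[\max_{y_j\in K_n}|VDM|\,w_t(y_1)^n\cdots w_t(y_{m_n})^n\bigr]^{1/l_n}\to\delta^{w_t}(K)$. The inequality $\ge$ is immediate (take points in $K\subset K_n$); for $\le$ one compares $K_n$ with a fixed $K_m$ for $n\ge m$, invokes the existence of the weighted transfinite diameter of the fixed compact $K_m$, and then lets $m\to\infty$, which requires the continuity of $\delta^{w_t}(\cdot)$ — equivalently, of the weighted equilibrium energy — along decreasing sequences of nonpluripolar compacta. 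This is the new point forced by allowing points outside $K$. The second, and the one I expect to be the main obstacle, is the differentiability of $w\mapsto\log\delta^w(K)$ at an admissible weight: this is the deep input of \cite{BBN}, and the work of Section~4 is to rerun that argument in the weighted setting; convexity of $t\mapsto\log\delta^{w_t}(K)$ (equivalently, concavity of the equilibrium energy in the weight) ensures the one-sided derivatives at $0$ exist, and the differentiability statement makes them agree, which is all the argument above uses.
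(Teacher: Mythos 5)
Your argument is correct and coincides with the paper's own proof---specifically with the ``alternate'' direct argument the authors give for Theorem \ref{maint}, in which the perturbation $w_t=we^{-tv}$ makes the relevant quantities affine in $t$ for discrete $\mu_n$, so that tangency of the limiting affine function to $t\mapsto\log\delta^{w_t}(K)$ at $t=0$ combined with the Berman--Boucksom differentiability result \eqref{gp0} identifies $\int v\,d\nu$ with $\int v\,d\mu_{K,Q}$. The one ingredient you assert rather than prove, the diagonal convergence $\delta_n^{w_t}(K_n)\to\delta^{w_t}(K)$ for decreasing $K_n$ in the weighted setting, is exactly the paper's Proposition \ref{wDECR} (established via the Rumely-type formula \eqref{reln} and $V_{K_n,Q}^*\nearrow V_{K,Q}^*$), so the two ``deep inputs'' you flag are precisely the ones the paper supplies.
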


\noindent In fact this is a special case of a general result on a sequence $\{\mu_n\}$ of measures, $\mu_n$ supported on $K_n$, given as Theorem \ref{genres} in section 4.
\medskip

\noindent{\bf Acknowledgement.} We thank Jean-Paul Calvi for an observation which greatly simplified the proof of Proposition \ref{DECR}.

\section{The univariate case} Let $K\subset \C$ be compact and let ${\mathcal M}(K)$ denote the convex set of probability  measures on $K$. For $\mu \in {\mathcal M}(K)$ define the logarithmic energy 
$$I(\mu):=\int_K \int_K \log {1\over |z-\zeta|}d\mu(z) d\mu(\zeta).$$
Either $\inf_{\mu \in {\mathcal M}(K)} I(\mu)=:I(\mu_{K})<+\infty$ for a unique $\mu_K\in {\mathcal M}(K)$ or else $I(\mu)=+\infty$ for all $\mu \in {\mathcal M}(K)$; this latter occurs when $K$ is polar. Suppose $K$ is nonpolar. Let $\{\epsilon_n\}$ be a decreasing sequence of positive numbers with $\lim_{n\to \infty} \epsilon_n =0$; and set
$$K_{n}:=\{z\in \C^d: \hbox{dist}(z,K)\leq \epsilon_n\}.$$ These are compact sets decreasing to $K$ (in fact, regular compacta; cf., \cite{klimek}); thus 
$$\lim_{n\to \infty} \delta(K_n)=\delta(K). $$
Given an array of points $\{z_{nj}\}_{j=0,...,n; \ n=1,2,...}$ where $z_{n0},...,z_{nn}\in K_{n}$, it follows from modifying the standard proof for asymptotic Fekete arrays in $K$ that under the condition
$$\lim_{n\to \infty} |VDM(z_{n0},...z_{nn})|^{\frac{1}{\binom{n+1}{2}}}=\delta(K),$$
we have
$$\mu_n:=\frac{1}{n+1}\sum_{j=0}^n \delta_{z_{nj}}\to \mu_K \ \hbox{weak-*}.$$
The idea, utilized in \cite{BBCL}, is simply to show that, if one replaces $\mu_n$ by $\tilde \mu_n$ by spreading the point masses to little disks or circles centered at these points with radii $r_n\to 0$ where $r_n\leq \epsilon_n$, then the hypothesis on the $VDM$'s gives that $\tilde \mu_n \to \mu_K$ weak-* (any weak-* limit of $\{\tilde \mu_n\}$ has logarithmic energy equal to that of $\mu_K$, hence it must equal $\mu_K$). Any weak-* limit of $\{\mu_n\}$ is supported on $K$ and coincides with the weak-* limit of $\{\tilde \mu_n\}$. In particular, no condition on $\epsilon_n$ other than  $\lim_{n\to \infty} \epsilon_n =0$ is necessary. The same conclusion holds for any sequence $\{K_n\}$ of compact sets which decrease to $K$; i.e., $K_{n+1}\subset K_n$ for all $K$ and 
$K =\cap_n K_n$, as follows from Proposition \ref{wtdaf} below.

A similar result holds for a weighted situation where  $w$ is an admissible weight function on $K_{n}$ for $n\geq n_0$ whose restriction to $K$ is admissible. We refer the reader to \cite{safftotik} for details of this theory. Let $K\subset \C$ be compact and let $w$ be an admissible weight function on $K$:  $w$ is a nonnegative, uppersemicontinuous function with
$\{z\in K:w(z)>0\}$ nonpolar -- hence $K$ is not polar. We write $Q:=-\log w$. Associated to $K,Q$ is a {\it weighted energy minimization problem}: for a probability measure $\tau $ on $K$, we consider the weighted energy
	$$I^w(\tau):=\int_K \int_K \log \frac {1}{|z-t|w(z)w(t)}d\tau(t)d\tau(z)=I(\tau) +2\int_KQd\tau$$ and  find $\inf_{\tau}I^w(\tau)$ where the infimum is taken over all probability measures $\tau $ with compact support in $K$. There exists a unique  minimizer which we denote by $\mu_{K,Q}$. The associated discrete problem leads to the {\it weighted transfinite diameter of $K$ with respect to $w$}:
	\begin{equation} \label{wtdvdm} \delta^w(K):=\lim_{n\to \infty} \bigl[\max_{\lambda_i\in K}|VDM(\lambda_0,...,\lambda_n)|w(\lambda_0)^n\cdots w(\lambda_n)^n\bigr] ^{1/{n+1\choose 2}}:=\lim_{n\to \infty}\delta_n^w(K).\end{equation} 
	 We have (cf., \cite{safftotik} )
 \begin{equation} \label{wtdenmin}I^w(\mu_{K,Q})= \inf_{\tau  \in {\mathcal M}(K)}I^w(\tau)=-\log \delta^w(K). \end{equation}

\begin{proposition} \label{wtdaf} Let $K\subset \C$ be compact and nonpluripolar and let $\{K_n\}$ be any sequence of compact sets which decrease to $K$; i.e., $K_{n+1}\subset K_n$ for all $K$ and 
$K =\cap_n K_n$. Let $w=e^{-Q}$ be any admissible weight function on $K_n$ for $n\geq n_0$ whose restriction to $K$ is admissible. Given an array of points $\{z_{nj}\}_{j=0,...,n; \ n=1,2,...}$ where $z_{n0},...,z_{nn}\in K_n$, if
$$\lim_{n\to \infty} [|VDM(z_{n0},...z_{nn})|w(z_{n0})^n\cdots w(z_{nn})^n]^{\frac{1}{\binom{n+1}{2}}}=\delta^w(K),$$
we have
$$\mu_n:=\frac{1}{n+1}\sum_{j=0}^n \delta_{z_{nj}}\to \mu_{K,Q} \ \hbox{weak-*}.$$

\end{proposition}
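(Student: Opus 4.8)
The plan is to adapt the classical "spreading" argument sketched in the prose above to the weighted setting, using the weighted energy functional $I^w$ as the key invariant. First I would fix a weak-$*$ convergent subsequence of $\{\mu_n\}$, say $\mu_{n_k} \to \nu$; since each $\mu_n$ is a probability measure supported on $K_n$ and the $K_n$ decrease to $K$, any such limit $\nu$ is a probability measure supported on $K$, so $\nu \in \mathcal{M}(K)$. It therefore suffices to show $I^w(\nu) \le -\log \delta^w(K)$, because then by \eqref{wtdenmin} and uniqueness of the weighted equilibrium measure we get $\nu = \mu_{K,Q}$, and since every subsequential limit equals $\mu_{K,Q}$ the full sequence converges.

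To estimate $I^w(\nu)$, I would introduce the smoothed measures $\tilde\mu_n := \frac{1}{n+1}\sum_{j=0}^n \sigma_{z_{nj}, r_n}$, where $\sigma_{z,r}$ is normalized arclength on the circle (or normalized area on the disk) of radius $r_n$ about $z$, with $r_n \to 0$ chosen small enough that $r_n \le \eps_n$ or, in the general case, small enough that the supports stay in $K_{n-1}$ or in a fixed neighbourhood of $K$ shrinking to $K$. The point of circles is that $\int\int \log\frac{1}{|z-\zeta|}\,d\sigma_{a,r}(z)\,d\sigma_{b,r}(\zeta) = \log\frac{1}{|a-b|}$ whenever $|a-b| \ge 2r$, while the self-interaction term $\log\frac1r$ of each circle with itself contributes a total of $\frac{1}{n+1}\log\frac1{r_n}$, which is negligible once divided appropriately; more carefully, one gets the standard inequality relating the discrete weighted energy of the points (equivalently $\frac{2}{n(n+1)}\log$ of the weighted $VDM$) to the continuous energy $I^w(\tilde\mu_n)$, up to an error $o(1)$ coming from the circle self-energies and from the oscillation of the upper semicontinuous function $Q$ on balls of radius $r_n$. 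Here one uses that $w$ is admissible on $K_{n_0}$, hence bounded above and with $Q$ bounded below on a neighbourhood of $K$, so the weight contributions $\frac{2}{n+1}\sum_j Q(z_{nj})$ are controlled; by taking $r_n \to 0$ fast, the smoothed weight integral $\int Q\,d\tilde\mu_n$ differs from $\frac{1}{n+1}\sum_j Q(z_{nj})$ by $o(1)$ using upper semicontinuity (one only needs the one-sided bound $\limsup \int Q\, d\tilde\mu_n \le \limsup \frac{1}{n+1}\sum Q(z_{nj})$, which is the easy direction).

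Combining, the hypothesis \eqref{wam} (with $m_n = n+1$, $l_n = \binom{n+1}{2}$) gives $\limsup_n I^w(\tilde\mu_n) \le -\log\delta^w(K)$. Now $\tilde\mu_n$ and $\mu_n$ have the same weak-$*$ limits along the chosen subsequence, because $r_n \to 0$ forces the Wasserstein (or any metrizing) distance between them to zero; thus $\tilde\mu_{n_k} \to \nu$ as well. Since the weighted energy kernel $\log\frac{1}{|z-\zeta|w(z)w(\zeta)}$ is lower semicontinuous and bounded below on a fixed neighbourhood of $K$ (again using admissibility of $w$ there), the functional $\tau \mapsto I^w(\tau)$ is lower semicontinuous under weak-$*$ convergence of measures supported in that neighbourhood, so $I^w(\nu) \le \liminf_k I^w(\tilde\mu_{n_k}) \le -\log\delta^w(K)$. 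This is the required inequality, and the proof concludes as above. The main obstacle I anticipate is bookkeeping the error terms cleanly when the $K_n$ are an arbitrary decreasing sequence rather than $\eps$-neighbourhoods: one must choose $r_n$ depending on $n$ (and possibly shrink to keep circles inside a set that still decreases to $K$, e.g. inside $K_{j}$ for all large $j$ eventually), and one must verify that admissibility of $w$ on $K_{n_0}$ plus admissibility of its restriction to $K$ really does give the uniform lower bound on $Q$ near $K$ needed for both the lower semicontinuity of $I^w$ and the negligibility of the smoothing error — this is where the hypotheses on $w$ are used in an essential way.
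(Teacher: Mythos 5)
Your overall reduction coincides with the paper's: pass to a weak-$*$ convergent subsequence $\mu_{n_k}\to\nu$, note that $\nu\in{\mathcal M}(K)$, and show $I^w(\nu)\le-\log\delta^w(K)$, which by (\ref{wtdenmin}) and uniqueness of the minimizer forces $\nu=\mu_{K,Q}$. The unweighted half of your smoothing estimate is also sound: for circles one has $\int\int\log\frac{1}{|z-\zeta|}\,d\sigma_{a,r}\,d\sigma_{b,r}\le\log\frac{1}{|a-b|}$ for $a\ne b$, the self-energies contribute $\frac{1}{n+1}\log\frac{1}{r_n}$, and a choice such as $r_n=1/n$ makes this $o(1)$ while keeping $r_n\to0$ (note, incidentally, that taking $r_n\to0$ ``fast'' works against you here, since the self-energy term requires $\log(1/r_n)=o(n)$).

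The genuine gap is in the weight term. You assert that $\limsup_n\int Q\,d\tilde\mu_n\le\limsup_n\frac{1}{n+1}\sum_jQ(z_{nj})$ is ``the easy direction \dots using upper semicontinuity.'' But an admissible weight $w$ is \emph{upper} semicontinuous, so $Q=-\log w$ is \emph{lower} semicontinuous, and lower semicontinuity gives exactly the opposite inequality, $\liminf_{r\to0}\int Q\,d\sigma_{z,r}\ge Q(z)$. Spreading a point mass can strictly \emph{increase} the weight energy by an amount that does not vanish as $r_n\to0$: if $w(z_0)=1$ and $w\equiv\frac12$ off $z_0$ (a legitimate usc weight), then $\int Q\,d\sigma_{z_0,r}=\log 2>0=Q(z_0)$ for every $r>0$, and if $w$ vanishes on part of a circle then $\int Q\,d\sigma=+\infty$. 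Since the hypothesis only samples $w$ at the array points, nothing prevents the circles from landing where $Q$ is much larger, so your bound $\limsup I^w(\tilde\mu_n)\le-\log\delta^w(K)$ does not follow. The repair is either (a) to smooth only the logarithmic part, keeping the weight term discrete: lower semicontinuity of $Q$ gives $\int Q\,d\nu\le\liminf_k\int Q\,d\mu_{n_k}$, lower semicontinuity of the log-kernel gives $I(\nu)\le\liminf_k I(\tilde\mu_{n_k})$, and the two liminfs combine in the right direction; or (b) to follow the paper's route (from \cite{blchrist}), which avoids smoothing altogether by truncating the weighted kernel at level $M$ and approximating $w$ from above by continuous weights $w_m\searrow w$, so that $h_{M,m}\le h_M$, the continuous truncated kernels pass to the limit under $\mu_{n_j}\times\mu_{n_j}\to\mu\times\mu$, and the diagonal contributes only $M/n_j$.
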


\begin{proof} We follow the argument in \cite{blchrist}. Since $K_n\searrow K$, any weak-* limit $\mu$ of $\{\mu_n\}$ will be a probability measure supported on $K$. Take a subsequence $\{\mu_{n_j}\}$ of $\{\mu_n\}$ which converges to $\mu$. It suffices to show $I^w(\mu)=-\log \delta^w(K)$ since $I^w(\mu_{K,Q})= -\log \delta^w(K)$ and the minimizer $\mu_{K,Q}$ is unique. 

We take continuous weight functions $\{w_m\}$ on $K_{n_0}$ with $w_m \searrow w$ and $w_m\geq a_m >0$. For $M\in \R$ let
$$h_{M,m}(z,t):=\min[M,\log \frac{1}{|z-t|w_m(z)w_m(t)}]\leq \log \frac{1}{|z-t|w_m(z)w_m(t)} \ \hbox{and}$$
$$h_M(z,t):=\min[M,\log \frac{1}{|z-t|w(z)w(t)}]\leq \log \frac{1}{|z-t|w(z)w(t)}.$$
Then $h_{M,m}\leq h_M$. Every continuous function $F(z,t)$ on $K_{n_0}\times K_{n_0}$ can be uniformly approximated by finite sums of the form $\sum_j f_j(z)g_j(t)$ where $f_j,g_j$ are continuous on $K_{n_0}$. Thus $\mu_{n_j}\times \mu_{n_j}\to \mu \times \mu$ and hence
$$I^w(\mu)=\lim_{M\to \infty}\lim_{m\to \infty} \int_K \int_K h_{M,m}(z,t)d\mu(z)d\mu(t)$$
$$=\lim_{M\to \infty}\lim_{m\to \infty} \int_{K_{n_0}} \int_{K_{n_0}} h_{M,m}(z,t)d\mu(z)d\mu(t)$$
$$=\lim_{M\to \infty}\lim_{m\to \infty} \lim_{j\to \infty} \int_{K_{n_0}} \int_{K_{n_0}} h_{M,m}(z,t)d\mu_{n_j}(z)d\mu_{n_j}(t)$$
$$\leq \lim_{M\to \infty}\limsup_{j\to \infty} \int_{K_{n_0}} \int_{K_{n_0}} h_M(z,t)d\mu_{n_j}(z)d\mu_{n_j}(t),$$
the last inequality arising since $h_{M,m}\leq h_M$.

For convenience in notation, we write $z^{(n_j)}_k:=z_{n_j k}$. Then for $k\not = l$,
$$h_M(z^{(n_j)}_k,z^{(n_j)}_l)\leq \log \frac{1}{|z^{(n_j)}_k-z^{(n_j)}_l|w(z^{(n_j)}_k)w(z^{(n_j)}_l)}$$
so that
$$\int_{K_{n_0}} h_M(z,t)d\mu_{n_j}(z)d\mu_{n_j}(t)\leq \frac{1}{n_j}M +(\frac{1}{n_j^2-n_j})\bigl[\sum_{k\not = l}\log \frac{1}{|z^{(n_j)}_k-z^{(n_j)}_l|w(z^{(n_j)}_k)w(z^{(n_j)}_l)}\bigr].$$
By hypothesis, given $\epsilon >0$, 
$$(\frac{1}{n_j^2-n_j})\bigl[\sum_{k\not = l}\log \frac{1}{|z^{(n_j)}_k-z^{(n_j)}_l|w(z^{(n_j)}_k)w(z^{(n_j)}_l)}\bigr]\leq -\log[\delta^w(K)-\epsilon]$$
for $n_j\geq n_j(\epsilon)$. Thus we can assume $w(z^{(n_j)}_k)>0$ and hence
$$I^w(\mu)\leq \lim_{M\to \infty}\limsup_{j\to \infty}\frac{1}{n_j}M - \log[\delta^w(K)-\epsilon]=-\log[\delta^w(K)-\epsilon].$$
This holds for all $\epsilon >0$ and hence $I^w(\mu)\leq -\log \delta^w(K)$. Since $\mu\in \mathcal M(K)$ implies  $I^w(\mu)\geq -\log \delta^w(K)$, equality holds.

\end{proof}

We will call arrays as in Proposition \ref{wtdaf} {\it asymptotic approximate (weighted) Fekete arrays} (AAF or AAWF for short). 

\section{Weighted pluripotential theory in $\C^d, \ d>1$} Let $e_1(z),...,e_j(z),...$ be a listing of the monomials
$\{e_i(z)=z^{\alpha(i)}=z_1^{\alpha_1}\cdots z_d^{\alpha_d}\}$ in
$\C^d$ indexed using a lexicographic ordering on the multiindices $\alpha=\alpha(i)=(\alpha_1,...,\alpha_d)\in \N^d$, but with deg$e_i=|\alpha(i)|$ nondecreasing. We write $|\alpha|:=\sum_{j=1}^d\alpha_j$. For 
$\zeta_1,...,\zeta_m\in \C^d$, let
\begin{equation} \label{vdmcn}VDM(\zeta_1,...,\zeta_m)=\det [e_i(\zeta_j)]_{i,j=1,...,m}  \end{equation}
$$= \det
\left[
\begin{array}{ccccc}
 e_1(\zeta_1) &e_1(\zeta_2) &\ldots  &e_1(\zeta_m)\\
  \vdots  & \vdots & \ddots  & \vdots \\
e_m(\zeta_1) &e_m(\zeta_2) &\ldots  &e_m(\zeta_m)
\end{array}
\right]$$
be a generalized Vandermonde determinant. In analogy with the univariate case, 
for a compact subset $K\subset \C^d$ let
$$V_m =V_m(K):=\max_{\zeta_1,...,\zeta_m\in K}|VDM(\zeta_1,...,\zeta_m)|.$$
Let $m_n$ be the number of monomials $e_i(z)$ of degree at most $n$ in $d$ variables, i.e., the dimension of $\mathcal P_n$, the space of holomorphic polynomials of degree at most $n$, and let $l_n:=\sum_{j=1}^{m_n}\hbox{deg} e_j$. Define $\delta_n(K) :=V_{m_n}^{1/l_n}$. Zaharjuta \cite{zah} showed that the limit 
\begin{equation} \label{tdlim}\delta(K):=\lim_{n\to \infty} \delta_n(K) \end{equation} exists; this is the {\it transfinite diameter} of $K$. We remark that $ l_n= \frac{d}{d+1}nm_n$.

The same definition of admissible weight function $w$ is used for $K\subset \C^d$ compact (of course now $\{z\in K:w(z)>0\}$ should be nonpluripolar). For $K$ compact, let 
$w=e^{-Q}$ be an admissible weight function on
$K$.  Given $\zeta_1,...,\zeta_{m_n}\in K$, let
$$W(\zeta_1,...,\zeta_{m_n}):=VDM(\zeta_1,...,\zeta_{m_n})w(\zeta_1)^{n}\cdots w(\zeta_{m_n})^{n}$$
$$= \det
\left[
\begin{array}{ccccc}
 e_1(\zeta_1) &e_1(\zeta_2) &\ldots  &e_1(\zeta_{m_n})\\
  \vdots  & \vdots & \ddots  & \vdots \\
e_{m_d}(\zeta_1) &e_{m_d}(\zeta_2) &\ldots  &e_{m_n}(\zeta_{m_n})
\end{array}
\right]\cdot w(\zeta_1)^{n}\cdots w(\zeta_{m_n})^{n}$$
be a {\it weighted Vandermonde determinant}. Define an {\it $n-$th order weighted Fekete set for $K$ and $w$} to be a set of $m_n$ points $\zeta_1,...,\zeta_{m_n}\in K$ with the property that
$$W_{m_n}=W_{m_n}(K):=|W(\zeta_1,...,\zeta_{m_n})|=\sup_{\xi_1,...,\xi_{m_n}\in K}|W(\xi_1,...,\xi_{m_n})|.$$
In analogy with the univariate notation, we also set $\delta^w_n(K):= W_{m_n}^{1/ l_n}$. Then the limit
\begin{equation} \label{deltaw} \delta^w(K):=\lim_{n\to \infty}\delta^w_n(K)\end{equation}
exists \cite{[BL]}; this is the weighted transfinite diameter of $K$ with respect to $w$. 

We define the weighted extremal function or weighted pluricomplex Green function $V^*_{K,Q}(z):=\limsup_{\zeta \to z}V_{K,Q}(\zeta)$ where
$$V_{K,Q}(z):=\sup \{u(z):u\in L(\C^d), \ u\leq Q \ \hbox{on} \ K\} .$$
Here, $L(\C^d):=\{u\in PSH(\C^d): u(z)-\log |z| =0(1), \ |z|\to \infty\}$ are the psh functions in $\C^d$ of minimal growth. We have $V^*_{K,Q}\in L(\C^d)$ and $\mu_{K,Q}:=(dd^cV_{K,Q}^*)^d$ is a well-defined positive measure with support in $K$. If $w\equiv 1$; i.e., $Q=-\log w\equiv 0$, we simply write $V_K, V_K^*$ and $\mu_K:= (dd^cV_{K}^*)^d$. In this setting, we say $K$ is {\it regular} if $V_K$ is continuous. Here we are normalizing our definition of $dd^c$ so that $\mu_{K,Q}, \mu_K$ are probability measures.

\section{AAF and AAWF in $\C^d, \ d>1$} The analogue of Proposition \ref{wtdaf} in $\C^d, \ d>1$ holds but the proof is much more difficult. For $E\subset \C^d$, a measure $\nu$ on $E$, and a weight $w$ on $E$, we use the notation
\begin{equation}\label{M}
G_n^{\nu,w}:=\left[\int_E\overline{e_i(z)}e_j(z)w(z)^{2n}d\nu\right]\in\C^{m_n\times m_n}
\end{equation}
for the weighted Gram matrix of $\nu$ of order $n$ with respect to the standard basis monomials $\{e_1,...,e_{m_n}\}$ in $\mathcal P_n$. We let
$$Z_n:= \int_E \cdots \int_E |VDM(z_1,...,z_{m_n})|^2 w(z_1)^{2n} \cdots w(z_{m_n})^{2n}d\nu(z_1) \cdots d\nu(z_{m_n})$$
and we have
\begin{equation}
\label{nthberg}
B_n^{\nu,w}(z):=\sum_{j=1}^{m_n} |q_j^{(n)}(z)|^2w(z)^{2n},
\end{equation}
the $n-th$ {\it Bergman function} of $E,w,\nu$. Here, $\{q_j^{(n)}\}_{j=1,...,m_n}$ is an orthonormal basis for $\mathcal P_n$ with respect to the weighted $L^2-$norm $p\to ||w^np||_{L^2(\nu)}$. The following calculations are straightforward.

\begin{lemma}\label{DetasInt}
Suppose that $\nu\in{\mathcal M}(E)$ and that $w$ is an admissible weight on $E$. Then
\begin{equation}\label{gneqn}
{\rm det}(G_n^{\nu,w})= \end{equation}
$$\frac{1}{m_n!}\int_{E^{m_n}}|VDM(z_1,\cdots,z_{m_n})|^2 w(z_1)^{2n}\cdots w(z_{m_n})^{2n} d\nu(z_1)\cdots d\nu(z_{m_n})=\frac{Z_n}{m_n!}$$
and
\begin{equation}\label{bneqn}
B_n^{\nu,w}(z)=
\end{equation}
$$\frac{m_n}{Z_n}\int_{E^{m_n-1}}
|VDM(z,z_2,\cdots,z_{m_n})|^2 w(z)^{2n}w(z_2)^{2n}\cdots w(z_{m_n})^{2n}d\nu(z_2)\cdots d\nu(z_{m_n}).$$

\end{lemma}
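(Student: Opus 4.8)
The plan is to verify both identities in Lemma \ref{DetasInt} by purely algebraic manipulations of determinants, exploiting the classical relation between a Gram matrix and an integral of a squared Vandermonde determinant. First I would observe that each entry of $G_n^{\nu,w}$ is $\int_E \overline{e_i(z)}e_j(z)w(z)^{2n}\,d\nu(z)$, so $G_n^{\nu,w} = \int_E \overline{\mathbf{e}(z)}\,\mathbf{e}(z)^{T}w(z)^{2n}\,d\nu(z)$ where $\mathbf{e}(z) = (e_1(z),\dots,e_{m_n}(z))$ is the column vector of basis monomials. Using multilinearity of the determinant, expand $\det(G_n^{\nu,w})$ as an $m_n$-fold integral: writing $G_n^{\nu,w}$ as a sum/integral of rank-one matrices indexed by the integration variable, the determinant of a sum of such contributions picks out, by the Cauchy--Binet formula (or equivalently by expanding columns one at a time and reindexing), exactly
$$\det(G_n^{\nu,w}) = \frac{1}{m_n!}\int_{E^{m_n}} \bigl|\det[e_i(z_j)]_{i,j}\bigr|^2\, w(z_1)^{2n}\cdots w(z_{m_n})^{2n}\,d\nu(z_1)\cdots d\nu(z_{m_n}),$$
and $\det[e_i(z_j)] = VDM(z_1,\dots,z_{m_n})$ by definition \eqref{vdmcn}. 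The factor $1/m_n!$ arises because the Cauchy--Binet sum over increasing multi-indices is replaced by an integral over all of $E^{m_n}$, which overcounts each ordered tuple's contribution by the $m_n!$ permutations; the absolute-value-squared makes the integrand symmetric so the overcounting is exactly uniform. This establishes \eqref{gneqn}, with the final equality $=Z_n/m_n!$ being just the definition of $Z_n$.

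For \eqref{bneqn}, the idea is to reduce the Bergman function to a ratio of two such Gram-type determinants. Recall $B_n^{\nu,w}(z) = \sum_{j=1}^{m_n}|q_j^{(n)}(z)|^2 w(z)^{2n}$ where $\{q_j^{(n)}\}$ is orthonormal for the weighted inner product. A standard fact is that this reproducing-kernel diagonal is basis-independent and can be written as $B_n^{\nu,w}(z) = w(z)^{2n}\,\mathbf{e}(z)^{*}\,(G_n^{\nu,w})^{-1}\,\mathbf{e}(z)$, i.e. the weighted squared norm of the evaluation functional at $z$ expressed in the monomial basis. I would then use Cramer's rule / the cofactor expression for $(G_n^{\nu,w})^{-1}$ to rewrite this quadratic form as $\det(\widetilde G)/\det(G_n^{\nu,w})$, where $\widetilde G$ is the Gram matrix of the $(m_n+1)$-point configuration obtained by adjoining the point mass at $z$ — more precisely, $\mathbf{e}(z)^{*}(G_n^{\nu,w})^{-1}\mathbf{e}(z)$ equals the ratio of the determinant of the bordered matrix $\begin{bmatrix} G_n^{\nu,w} & \mathbf{e}(z) \\ \mathbf{e}(z)^{*} & \ast\end{bmatrix}$-type object to $\det(G_n^{\nu,w})$; applying the integral formula \eqref{gneqn} to the numerator (now an integral over $E^{m_n-1}$ of $|VDM(z,z_2,\dots,z_{m_n})|^2$, with the point $z$ frozen) and to the denominator ($=Z_n/m_n!$) yields
$$B_n^{\nu,w}(z) = \frac{m_n}{Z_n}\int_{E^{m_n-1}}|VDM(z,z_2,\dots,z_{m_n})|^2\,w(z)^{2n}w(z_2)^{2n}\cdots w(z_{m_n})^{2n}\,d\nu(z_2)\cdots d\nu(z_{m_n}),$$
the combinatorial factors $m_n!$ and $(m_n-1)!$ combining to give $m_n$.

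The main obstacle, modest as it is given the lemma is labeled "straightforward," is bookkeeping the combinatorial constants correctly: keeping track of the $m_n!$ versus $(m_n-1)!$ in the Cauchy--Binet-to-integral passage, and making sure the bordered-determinant identity for $\mathbf{e}(z)^{*}(G_n^{\nu,w})^{-1}\mathbf{e}(z)$ is set up so that the numerator is genuinely the order-$(m_n-1)$ analogue of $Z_n$ with one variable held fixed at $z$. An alternative route to \eqref{bneqn} that sidesteps matrix inversion is to work directly with the orthonormal basis: write $B_n^{\nu,w}(z)$ using the Christoffel--Darboux-type expansion of the reproducing kernel, expand each $|q_j^{(n)}(z)|^2$ via Gram--Schmidt determinantal formulas, and recognize the resulting expression as a ratio of Vandermonde integrals; this is essentially the same computation organized differently. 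Either way the argument is a finite-dimensional linear-algebra identity and requires no analytic input beyond the finiteness of the integrals, which is guaranteed by admissibility of $w$ and $\nu \in \mathcal M(E)$.
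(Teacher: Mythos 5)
Your argument is correct and is exactly the standard computation the paper has in mind: the first identity is the Andreief/Cauchy--Binet expansion of a Gram determinant into an integral of $|VDM|^2$ (the $1/m_n!$ coming from symmetrization), and the second follows from $B_n^{\nu,w}(z)=w(z)^{2n}\mathbf{e}(z)^{*}(G_n^{\nu,w})^{-1}\mathbf{e}(z)$ together with the cofactor expansion of $VDM(z,z_2,\dots,z_{m_n})$ along the column of $z$, the factors $(m_n-1)!$ and $m_n!$ combining to give $m_n/Z_n$. The paper itself offers no proof (it simply calls the calculations straightforward), so there is nothing to compare beyond noting that your route is the intended one; the only point worth making explicit is that the formulas presuppose $Z_n>0$, i.e.\ the nondegeneracy of $G_n^{\nu,w}$, which is needed anyway for the orthonormal basis defining $B_n^{\nu,w}$ to exist.
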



The notion of {\it optimal measure} will be useful; see \cite{BBLW} for more information.

\begin{definition}\label{optmeas} If a probability measure $\mu$ on $E$ has the property that
\begin{equation}\label{wa} {\rm det}(G_n^{\mu',w})\le {\rm det}(G_n^{\mu,w})
\end{equation}
for all other probability measures $\mu'$ on $E$ then $\mu$ is said to be an {\it optimal measure} of
order $n$ for $E$ and $w.$\end{definition}

Let $K$ be a nonpluripolar compact set in $\C^d$, and let $\{K_n\}$ be any sequence of compact sets which decrease to $K$; i.e., $K_{n+1}\subset K_n$ for all $K$ and 
$K =\cap_n K_n$. Let $w$ be an admissible weight function on $K_{n}$ for $n\geq n_0$ whose restriction to $K$ is admissible. In this setting, an elementary but crucial result is a modification of Proposition 2.10 of \cite{lev}. 

\begin{proposition}\label{tfd} Suppose that, for the diagonal sequence $\{\delta_n^w(K_{n})\}_n$,
\begin{equation}\label{diag} \lim_{n\to\infty} \delta_n^w(K_{n})=\delta^w(K).\end{equation}
For $n=1,2,...$, let $\mu_n$ be an optimal measure of order $n$ for $K_{n}$ and $w.$ Then
\[\lim_{n\to\infty}{\rm det}(G_n^{\mu_n,w})^{\frac{1}{2l_n}}=\delta^w(K).\]
\end{proposition}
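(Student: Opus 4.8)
The plan is to prove $\lim_{n\to\infty}{\rm det}(G_n^{\mu_n,w})^{\frac{1}{2l_n}}=\delta^w(K)$ by squeezing this quantity between two sequences that both converge to $\delta^w(K)$. The upper bound should come from the optimality of $\mu_n$ together with the fact that $\delta_n^w(K_n)$ is built from a genuine maximum of weighted Vandermonde determinants over $K_n^{m_n}$, while the lower bound should come from choosing a concrete probability measure on $K_n$ whose Gram determinant one can estimate from below by $\delta^w(K)$ (up to subexponential factors).

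First I would invoke Lemma \ref{DetasInt} to rewrite ${\rm det}(G_n^{\mu_n,w}) = Z_n(\mu_n)/m_n!$, where $Z_n(\nu) = \int_{(K_n)^{m_n}}|VDM|^2\,w^{2n}\cdots\,d\nu\cdots d\nu$. Since $|VDM(z_1,\dots,z_{m_n})|^2 w(z_1)^{2n}\cdots w(z_{m_n})^{2n}\le W_{m_n}(K_n)^2 = \delta_n^w(K_n)^{2l_n}$ pointwise on $K_n^{m_n}$, and $\mu_n$ is a probability measure, we get $Z_n(\mu_n)\le \delta_n^w(K_n)^{2l_n}$ for every probability measure on $K_n$, hence certainly for the optimal one; so ${\rm det}(G_n^{\mu_n,w})^{1/(2l_n)}\le \delta_n^w(K_n)\cdot (m_n!)^{-1/(2l_n)}$. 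Because $l_n = \frac{d}{d+1}nm_n$ grows much faster than $m_n\log m_n$, we have $(m_n!)^{1/(2l_n)}\to 1$, so the hypothesis \eqref{diag} gives $\limsup_n {\rm det}(G_n^{\mu_n,w})^{1/(2l_n)}\le \delta^w(K)$.

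For the lower bound I would exploit optimality in the other direction: since $\mu_n$ is optimal of order $n$ for $K_n$, ${\rm det}(G_n^{\mu_n,w})\ge {\rm det}(G_n^{\mu',w})$ for any probability measure $\mu'$ supported in $K_n$. It suffices to produce one such $\mu'$ (depending on $n$) with ${\rm det}(G_n^{\mu',w})^{1/(2l_n)}$ bounded below by $\delta_n^w(K_n)$ times a subexponential factor — for instance, take a near-Fekete configuration of $m_n$ points in $K_n$ realizing (nearly) $W_{m_n}(K_n)$ and let $\mu'$ be the associated normalized counting measure (or, to avoid atoms causing a degenerate Gram matrix, spread each point mass over a tiny ball, as in the univariate argument of Section 2; here one uses that $K_n$ has nonempty interior if the $K_n$ are the $\epsilon_n$-neighborhoods, or more generally one can pass to such neighborhoods since shrinking only decreases $\delta_n^w$). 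Then $Z_n(\mu')$ picks up the diagonal-free terms in the expansion of $|VDM|^2$, which by the standard ``permanent $\ge$ one term'' argument is at least $m_n!$ times the square of a single near-maximal weighted Vandermonde, up to the continuity modulus of $w$ and the disk radii; dividing by $m_n!$ and taking $2l_n$-th roots yields $\liminf_n {\rm det}(G_n^{\mu_n,w})^{1/(2l_n)}\ge \lim_n \delta_n^w(K_n) = \delta^w(K)$, again using \eqref{diag} and $(m_n!)^{1/(2l_n)}\to 1$.

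The main obstacle is the lower bound bookkeeping: one must check that replacing the optimal measure's competitor by a spread-out near-Fekete measure on $K_n$ costs only a subexponential (in $l_n$) factor, uniformly in $n$. Concretely, this means controlling (i) the oscillation of $w^{2n}$ over balls of radius $r_n\to 0$ — handled by choosing $r_n$ small enough relative to the (admissible, hence locally bounded) weight and to $n$, using that $w$ is, say, continuous after an initial approximation as in Proposition \ref{wtdaf}; (ii) the combinatorial loss in passing from ${\rm det}(G_n) = \frac{1}{m_n!}\int |VDM|^2$ to a single term of the Leibniz/permanent expansion, which is exactly the factor $m_n!$ that cancels; and (iii) ensuring the competitor measure is genuinely supported in $K_n$ and nondegenerate. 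None of these is deep, but assembling them so the estimate is clean is where the real work lies; everything else is a direct consequence of Lemma \ref{DetasInt}, the optimality of $\mu_n$, the trivial pointwise bound on $|VDM|^2 w^{2n}\cdots$, and the growth rate $l_n = \frac{d}{d+1}nm_n$.
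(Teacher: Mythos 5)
Your squeeze is essentially the paper's proof: the upper bound $\det(G_n^{\mu_n,w})\le\frac{1}{m_n!}(\delta_n^w(K_n))^{2l_n}$ comes from the pointwise bound on the integrand in Lemma \ref{DetasInt} exactly as you say, and the lower bound comes from testing the optimality of $\mu_n$ against a Fekete configuration. The one place you misjudge the difficulty is your ``main obstacle'': there is none. The paper's competitor is the plain normalized counting measure $\nu_n=\frac{1}{m_n}\sum_{k=1}^{m_n}\delta_{f_k}$ at an exact weighted Fekete configuration $f_1,\dots,f_{m_n}\in K_n$; its Gram matrix is $\frac{1}{m_n}A^*A$ with $A=[e_i(f_j)w(f_j)^n]$, hence automatically nonsingular, and
\[
\det(G_n^{\nu_n,w})=\frac{1}{m_n^{m_n}}\,|VDM(f_1,\dots,f_{m_n})|^2\,w(f_1)^{2n}\cdots w(f_{m_n})^{2n}=\frac{1}{m_n^{m_n}}\bigl(\delta_n^w(K_n)\bigr)^{2l_n}
\]
exactly --- your ``permanent $\ge$ one term'' step is an identity here, not an inequality, because an atomic measure on exactly $m_n$ points contributes only the $m_n!$ permutation tuples to the integral in \eqref{gneqn}. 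Since $(m_n^{m_n})^{1/(2l_n)}\to 1$ and $(m_n!)^{1/(2l_n)}\to 1$ (using $l_n=\frac{d}{d+1}nm_n$ and $\log m_n=O(\log n)$), the squeeze closes against hypothesis \eqref{diag} with no continuity-modulus or radius bookkeeping at all. Your fallback of smearing the atoms over small balls is not only unnecessary but would actually be problematic for general compact $K_n$ (they may have empty interior, and the competitor must stay supported in $K_n$ for the optimality comparison to apply), so you should commit to the atomic choice, which is the first option you list.
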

\begin{proof} Since $\mu_n$ is a probability measure, it follows from (\ref{gneqn}) that
\begin{equation}\label{4.7}
{\rm det}(G_n^{\mu_n,w})\le {1\over m_n!}(\delta_n^w(K_n))^{2l_n}.
\end{equation}
Now if $f_1,f_2,\cdots,f_{m_n}\in K_n$ are weighted Fekete points of order $n$ for $K_n$ and $w$, 
i.e., points in $K_n$ for which 
\[|VDM(z_1,\cdots,z_{m_n})|w^n(z_1)w^n(z_2)\cdots w^n(z_{m_n})\]
is maximal, then the discrete measure 
\begin{equation} \label{48}
\nu_n={1\over {m_n}}\sum_{k=1}^{m_n} \delta_{f_k}
\end{equation}
is a candidate for an optimal measure of order $n$ for $K_n$ and $w$; hence
\[ {\rm det}(G_n^{\nu_n,w})\le {\rm det}(G_n^{\mu_n,w}).\]
But from (\ref{48})
\begin{eqnarray*}
{\rm det}(G_n^{\nu_n,w})&=&{1\over {m_n}^{m_n}}|VDM(f_1,\cdots,f_{m_n})|^2w(f_1)^{2n}w(f_2)^{2n}\cdots w(f_{m_n})^{2n} \\
&=&\left(\max_{z_i\in K}|VDM(z_1,\cdots,z_{m_n})|w^n(z_1)w^n(z_2)\cdots w^n(z_{m_n})\right)^2 \\
&=&{1\over {m_n}^{m_n}}(\delta_n^w(K_{n}))^{2l_n}
\end{eqnarray*}
so that 
\[{1\over {m_n}^{m_n}}(\delta_n^w(K_{n}))^{2l_n} \le {\rm det}(G_n^{\mu_n,w}) \]
and the result follows from this, (\ref{4.7}), and the hypothesis (\ref{diag}).
\end{proof}

The bulk of the proof of the analogue of Proposition \ref{wtdaf} in $\C^d, \ d>1$ is to modify the arguments in \cite{BBN} to show that if (\ref{diag}) holds; i.e., for the diagonal sequence,
$$ \lim_{n\to\infty} \delta_n^w(K_{n})=\delta^w(K),$$ 
then for $\{\mu_n\}$ a sequence of probability measures on $K_{n}$ with the property that 
\[\lim_{n\to\infty}{\rm det}(G_n^{\mu_n,w})^{\frac{1}{2l_n}}=\delta^w(K),\]
we have $\frac{1}{m_n}B_n^{\mu_n,w} d\mu_n \to \mu_{K,Q}$ (Theorem \ref{genres}).

We first show that (\ref{diag}) holds in a very general setting, beginning with the unweighted case.

\begin{proposition} \label{DECR} Let $K\subset \C^d$ be compact and nonpluripolar and let $\{K_n\}$ be any sequence of compact sets which decrease to $K$; i.e., $K_{n+1}\subset K_n$ for all $K$ and 
$K =\cap_n K_n$. Then 
$$\lim_{n\to \infty} \delta_n(K_n)=\delta(K).$$

\end{proposition}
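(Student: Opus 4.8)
The plan is to prove the two inequalities $\liminf_{n\to\infty}\delta_n(K_n)\ge\delta(K)$ and $\limsup_{n\to\infty}\delta_n(K_n)\le\delta(K)$ separately. The first is immediate: since $K\subset K_n$, we have $V_{m_n}(K)\le V_{m_n}(K_n)$, hence $\delta_n(K)\le\delta_n(K_n)$ for every $n$, and letting $n\to\infty$ the known limit $\delta_n(K)\to\delta(K)$ from \eqref{tdlim} gives $\liminf_{n\to\infty}\delta_n(K_n)\ge\delta(K)$. So the whole content is the upper bound.

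For the upper bound, the natural approach is a two-parameter comparison. Fix an integer $m$ and a small $\epsilon>0$. Because $K$ is compact and $K=\cap_n K_n$ with the $K_n$ decreasing, for any fixed $m$ there is an index $N=N(m)$ such that $K_n$ is contained in a small neighborhood of $K$ for all $n\ge N$; more precisely, one wants to say that $\delta_m(K_n)\le\delta_m(K)+\epsilon$ once $n$ is large enough. This is a consequence of upper semicontinuity of the $m$-th order diameter under a decreasing intersection of compacta: the function $(\zeta_1,\dots,\zeta_{m_m})\mapsto|VDM(\zeta_1,\dots,\zeta_{m_m})|$ is continuous, the sets $K_n^{m_m}$ decrease to $K^{m_m}$, and a standard compactness argument (if the maxima over $K_n^{m_m}$ stayed bounded away from the maximum over $K^{m_m}$, extract convergent subsequences of near-maximizers, whose limit would lie in $\cap_n K_n^{m_m}=K^{m_m}$ and violate the definition of $\delta_m(K)$) gives $\lim_{n\to\infty}\delta_m(K_n)=\delta_m(K)$ for each fixed $m$. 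Then one invokes the known submultiplicativity/monotonicity structure behind Zaharjuta's theorem: for a fixed compact $L$ one has $\delta(L)\le\delta_m(L)$, or more usefully an inequality of the form $\delta_n(L)\le\delta_m(L)\cdot(\text{error term}\to 1\text{ as }n\to\infty\text{ for fixed }m)$. Applying this with $L=K_n$ and letting $n\to\infty$ along the diagonal, using $\delta_m(K_n)\to\delta_m(K)$, yields $\limsup_{n\to\infty}\delta_n(K_n)\le\delta_m(K)$; then let $m\to\infty$ and use $\delta_m(K)\to\delta(K)$.

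The delicate point — and the reason the paper credits Calvi with a simplification — is getting a clean inequality comparing the diagonal quantity $\delta_n(K_n)$ to the fixed-level quantity $\delta_m(K_n)$ that is uniform enough in the varying set $K_n$. A robust way to arrange this is to split the $m_n$-by-$m_n$ Vandermonde determinant for $K_n$ into blocks of size $m_m$ by grouping monomials by a fixed degree pattern, bounding each block's contribution by $V_{m_m}(K_n)$ times a combinatorial factor that depends only on dimensions and not on $K_n$; the exponents then reassemble, via $l_n=\frac{d}{d+1}nm_n$, so that the $K_n$-dependent part is exactly $\delta_m(K_n)^{1+o(1)}$ with the $o(1)$ depending only on $n$ and $m$. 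Alternatively, and perhaps more in the spirit of the cited simplification, one can bound $V_{m_n}(K_n)$ directly using the fact that any $m_n$ points can be handled through a product of lower-order Vandermonde-type quantities evaluated on $K_n$, each controlled by $\delta_m(K_n)$; the key is that all the structural inequalities in Zaharjuta's proof are ``local'' in the sense that they never use anything about the compact set beyond evaluating $VDM$ on it, so they apply verbatim with $K_n$ in place of a fixed $K$, with constants independent of which set is used. Once that uniformity is in hand, the diagonal limit falls out by the two-parameter argument above; I expect establishing this uniformity — i.e., extracting from the Zaharjuta machinery an inequality with $K_n$-independent error — to be the main obstacle, while the semicontinuity $\delta_m(K_n)\to\delta_m(K)$ and the two soft limit passages are routine.
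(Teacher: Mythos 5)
Your lower bound is exactly the paper's: $K\subset K_n$ gives $\delta_n(K)\le\delta_n(K_n)$, and $\delta_n(K)\to\delta(K)$ by Zaharjuta. The problem is the upper bound, where your argument has a confessed and genuine gap. Everything hinges on an inequality of the form $\delta_n(L)\le\delta_m(L)\cdot(1+o(1))$ with the error term uniform over the varying sets $L=K_n$; you sketch two ways one might extract such a bound from the block structure of Vandermonde determinants, but you carry out neither, and you yourself identify this uniformity as ``the main obstacle.'' That obstacle is real: in $\C^d$ with $d>1$ the sequence $\delta_n(E)$ is not known to be monotone in $n$, and Zaharjuta's proof that the limit exists goes through directional Chebyshev constants rather than through a clean order-comparison inequality, so a uniform quantitative comparison between $\delta_n$ and $\delta_m$ is not something you can simply cite. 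As written, the proof is incomplete at its central step. (The preliminary claim $\delta_m(K_n)\to\delta_m(K)$ for fixed $m$, via compactness of near-maximizers, is fine but ends up not being needed.)

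The point of the Calvi simplification, which your proposal misses, is that no comparison across different orders is needed at all. For the upper bound you should compare at the \emph{same} order $n$ but across \emph{different sets}: fixing $n_0$, for all $n>n_0$ you have $K_n\subset K_{n_0}$, hence $\delta_n(K_n)\le\delta_n(K_{n_0})$, and therefore
$$\limsup_{n\to\infty}\delta_n(K_n)\le\lim_{n\to\infty}\delta_n(K_{n_0})=\delta(K_{n_0}),$$
using Zaharjuta's theorem for the single fixed compact $K_{n_0}$. Letting $n_0\to\infty$ and invoking the standard continuity of the transfinite diameter under decreasing limits, $\delta(K_{n_0})\to\delta(K)$, finishes the proof. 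This uses only the trivial monotonicity of $\delta_n(\cdot)$ in the set, plus the two known limit facts, and entirely avoids the uniform-in-$L$ order comparison that your route would require you to prove.
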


\begin{proof} It is standard that $\delta$ is continuous under decreasing limits; i.e., 
$$\lim_{n\to \infty} \delta(K_n)=\delta(K);$$
and, by Zaharjuta \cite{zah}, for each compact set $E$,
$$\lim_{n\to \infty} \delta_n(E)=\delta(E).$$
We will use these facts.

First, for each $n$, $K\subset K_n$ so that $\delta_n(K) \leq \delta_n (K_n)$. Thus
$$\delta(K)=\liminf_{n\to \infty} \delta_n(K) \leq \liminf_{n\to \infty} \delta_n (K_n).$$
On the other hand, fixing $n_0$, for all $n>n_0$ we have $K_n\subset K_{n_0}$ so that $\delta_n(K_n) \leq \delta_n (K_{n_0})$. Thus 
$$\limsup_{n\to \infty} \delta_n(K_n) \leq \limsup_{n\to \infty} \delta_n (K_{n_0})=\delta(K_{n_0}).$$
This holds for each $n_0$; hence 
$$\limsup_{n\to \infty} \delta_n(K_n) \leq \lim_{n_0\to \infty} \delta(K_{n_0})=\delta(K).$$
\end{proof}

We turn to the weighted case, which requires a few more ingredients.

 \begin{proposition} \label{wDECR} Let $K\subset \C^d$ be compact and nonpluripolar and let $\{K_n\}$ be any sequence of compact sets which decrease to $K$; i.e., $K_{n+1}\subset K_n$ for all $K$ and 
$K =\cap_n K_n$. Let $w=e^{-Q}$ be any admissible weight function on $K_n$ for $n\geq n_0$ whose restriction to $K$ is admissible. Then 
$$\lim_{n\to \infty} \delta^w_n(K_n)=\delta(K).$$

\end{proposition}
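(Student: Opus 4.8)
The plan is to mirror the unweighted argument of Proposition~\ref{DECR}, but since the weighted transfinite diameter involves the weight only on the sets $K_n$ (not on $K$ itself once the weight is fixed), the monotonicity of $\delta_n^w$ under inclusion of compact sets is immediate: for $E\subset F$ one has $\delta_n^w(E)\le \delta_n^w(F)$ because the sup defining $W_{m_n}$ is taken over a larger set. Thus exactly as before, $K\subset K_n$ gives $\delta_n^w(K)\le \delta_n^w(K_n)$, and for fixed $n_0$ and $n>n_0$, $K_n\subset K_{n_0}$ gives $\delta_n^w(K_n)\le\delta_n^w(K_{n_0})$. Taking $\liminf$ in the first and $\limsup$ in the second yields
\[
\delta^w(K)=\lim_{n\to\infty}\delta_n^w(K)\le\liminf_{n\to\infty}\delta_n^w(K_n)\le\limsup_{n\to\infty}\delta_n^w(K_n)\le\limsup_{n\to\infty}\delta_n^w(K_{n_0})=\delta^w(K_{n_0}),
\]
using the existence of the limit $\lim_n\delta_n^w(E)=\delta^w(E)$ for each fixed compact $E$ from \cite{[BL]}. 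So it remains to pass $n_0\to\infty$ and prove the weighted continuity under decreasing limits: $\lim_{n_0\to\infty}\delta^w(K_{n_0})=\delta^w(K)$.

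The weighted continuity statement is the part that requires the "few more ingredients" alluded to in the text, so I expect it to be the main obstacle. The cleanest route is through the weighted energy characterization \eqref{wtdenmin} (which holds in $\C^d$ as well, via \cite{[BL]} or the results quoted in the multivariate sections): $-\log\delta^w(E)=I^w(\mu_{E,Q})=\inf_{\tau\in\mathcal M(E)}I^w(\tau)$. Since $K_{n_0}\searrow K$, the sets of probability measures $\mathcal M(K_{n_0})$ decrease and their intersection is $\mathcal M(K)$; hence the infima $-\log\delta^w(K_{n_0})$ increase to some limit $L\le -\log\delta^w(K)$. For the reverse inequality one takes, for each $n_0$, the weighted equilibrium measure $\mu_{n_0}:=\mu_{K_{n_0},Q}$ (supported in $K_{n_0}$), extracts a weak-$*$ convergent subsequence $\mu_{n_0}\to\mu$, notes $\mu$ is a probability measure supported on $K$, and shows $I^w(\mu)\le\liminf I^w(\mu_{n_0})=-L$ by the same truncation-and-lower-semicontinuity argument used in the proof of Proposition~\ref{wtdaf} (approximate $w$ from above by continuous $w_m\searrow w$ on $K_{n_0}$, truncate the kernel at level $M$, use weak-$*$ convergence of the product measures on $K_{n_0}\times K_{n_0}$, then let $m\to\infty$ and $M\to\infty$). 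Since $\mu\in\mathcal M(K)$, $I^w(\mu)\ge I^w(\mu_{K,Q})=-\log\delta^w(K)$, forcing $L=-\log\delta^w(K)$, i.e. $\delta^w(K_{n_0})\to\delta^w(K)$.

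One technical point worth flagging: the weight $w$ is only assumed admissible on $K_n$ for $n\ge n_0$, so all measure-theoretic manipulations should be carried out on the fixed ambient set $K_{n_0}$ (or rather $K_{n_0'}$ for some $n_0'\ge n_0$), exactly as in Proposition~\ref{wtdaf}; the monotone approximation $w_m\searrow w$ with $w_m$ continuous and bounded below by $a_m>0$ is available there. Another point is that the $\C^d$ analogue of the energy identity \eqref{wtdenmin} and the uniqueness of the weighted equilibrium measure $\mu_{K,Q}$ must be invoked — these are exactly the facts recorded in Section~3 and \cite{[BL]}, so this is legitimate. Assuming those, the proof is a two-sided squeeze combining the trivial monotonicity of $\delta_n^w$ under set inclusion with the weighted continuity under decreasing intersections, and the latter is where the real work (and the appeal to lower semicontinuity of weighted energy) lives.

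\begin{proof}[Proof sketch to be expanded]
As above: trivial monotonicity of $\delta_n^w$ under inclusion gives $\delta^w(K)\le\liminf_n\delta_n^w(K_n)\le\limsup_n\delta_n^w(K_n)\le\delta^w(K_{n_0})$ for every $n_0$; weighted continuity under decreasing limits, proved via the energy characterization and weak-$*$ lower semicontinuity of $I^w$ exactly as in Proposition~\ref{wtdaf}, gives $\delta^w(K_{n_0})\to\delta^w(K)$; combine.
\end{proof}
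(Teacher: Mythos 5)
Your first half — the monotonicity squeeze $\delta^w(K)\le\liminf_n\delta_n^w(K_n)\le\limsup_n\delta_n^w(K_n)\le\delta^w(K_{n_0})$, reducing everything to the continuity $\delta^w(K_{n_0})\to\delta^w(K)$ under decreasing limits — is exactly the paper's skeleton (it is the weighted rerun of Proposition \ref{DECR}). The gap is in how you propose to prove that continuity. You invoke the energy characterization \eqref{wtdenmin}, $-\log\delta^w(E)=I^w(\mu_{E,Q})=\inf_{\tau\in\mathcal M(E)}I^w(\tau)$ with the two-point logarithmic kernel, and assert parenthetically that it ``holds in $\C^d$ as well, via \cite{[BL]} or the results quoted in the multivariate sections.'' It does not. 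That identity is a strictly univariate fact from \cite{safftotik}; in $\C^d$ with $d>1$ the weighted equilibrium measure $\mu_{K,Q}=(dd^cV_{K,Q}^*)^d$ is \emph{not} the minimizer of a quadratic logarithmic energy, and $-\log\delta^w(K)$ is \emph{not} $\inf_\tau I^w(\tau)$. The correct multivariate substitute is the Rumely-type formula, which expresses $\delta^w(K)$ through a nonlinear (Monge--Amp\`ere type) energy of the extremal function — this is precisely why the paper's Proposition \ref{wDECR} needs ``a few more ingredients'' and why your ``cleanest route'' (extract a weak-$*$ limit of the $\mu_{K_{n_0},Q}$, use lower semicontinuity of $I^w$, conclude by uniqueness of the minimizer) has nothing to stand on in dimension $d>1$.

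What the paper actually does for this step: it uses the identity \eqref{reln}, $\delta^w(K)=\bigl(\exp[-\int_KQ\,(dd^cV_{K,Q}^*)^d]\bigr)^{1/d}\,d^w(K)$, where $d^w(K)$ is the Zaharjuta-style geometric mean of directional weighted Chebyshev constants $\tau^w(K,\theta)$. It then shows $\tau^w(K_n,\theta)\to\tau^w(K,\theta)$ (hence $d^w(K_n)\to d^w(K)$), and handles the other factor via $V_{K_n,Q}^*\nearrow V_{K,Q}^*$ off a pluripolar set, convergence of the Monge--Amp\`ere measures, and lower semicontinuity of $Q$ to get $\liminf_n\int_{K_n}Q\,(dd^cV_{K_n,Q}^*)^d\ge\int_KQ\,(dd^cV_{K,Q}^*)^d$; combined with the easy inequality $\liminf_n\delta^w(K_n)\ge\delta^w(K)$ this yields \eqref{wkn}. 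So your overall architecture is right, but the central analytic step rests on a univariate identity that fails in the setting of the proposition, and would need to be replaced by the Chebyshev-constant/Rumely-formula argument (or an equivalent appeal to the Berman--Boucksom energy of \cite{BBnew}, \cite{lev}).
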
 

\begin{proof} We claim that
\begin{equation}\label{wkn} \lim_{n\to \infty} \delta^w(K_n)=\delta^w(K).\end{equation}
Since, as in the unweighted case, for each compact set $E$ and admissible weight $w$ on $E$
$$\lim_{n\to \infty} \delta^w_n(E)=\delta^w(E),$$
given (\ref{wkn}), we can repeat the proof of Proposition \ref{DECR} in this weighted setting.

To verify (\ref{wkn}), we first observe that since $K\subset K_n$, $\delta^w(K_n)\geq \delta^w(K)$ so that
$$\liminf_{n\to \infty} \delta^w(K_n)\geq \delta^w(K).$$
For the reverse inequality with limsup, we note that one can define 
a slightly different weighted transfinite diameter, in the spirit of Zaharjuta, via
$$d^w(K):=\exp\bigl[\frac{1}{|\Sigma|}\int_{\Sigma^0}\log {\tau^w(K,\theta)}d\theta \bigr] $$
(cf., \cite{[BL]} for the appropriate definitions and results).	There is a relationship between $\delta^w(K)$ and $d^w(K)$:
	\begin{equation}\label{reln} \delta^w(K)=\bigl(\exp [-\int_KQ(dd^cV_{K,Q}^*)^d]\bigr)^{1/d} d^w(K).\end{equation}
	Now it is straightforward that for {\it any} decreasing family of compact sets $\{K_n\}$ decreasing to $K$ and $w=e^{-Q}$ any admissible weight function on $K_n$ for $n\geq n_0$ whose restriction to $K$ is admissible, we have 
	$$ \lim_{n\to \infty} \tau^w(K_n,\theta) = \tau^w(K,\theta)$$
	for $\theta \in \Sigma^0$ and hence
	$$\lim_{n\to \infty} d^w(K_n)=d^w(K).$$
	But we also have $V_{K_n,Q}^*\nearrow V_{K,Q}^*$ pointwise on $\C^d$ except perhaps a pluripolar set so that $(dd^cV_{K_n,Q}^*)^d\to (dd^cV_{K,Q}^*)^d$ as measures. Since $Q$ is lowersemicontinuous,
	$$\liminf_{n\to \infty} \int_{K_n} Q(dd^cV_{K_n,Q}^*)^d\geq \int_KQ(dd^cV_{K,Q}^*)^d. $$
	Hence
	$$\limsup_{n\to \infty} \delta^w(K_n)= \exp [-\liminf_{n\to \infty}\int_{K_n} Q(dd^cV_{K,Q}^*)^d]\bigr)^{1/d} d^w(K)$$
	$$\leq \bigl(\exp [-\int_KQ(dd^cV_{K,Q}^*)^d]\bigr)^{1/d} d^w(K)= \delta^w(K).$$
	This shows that 
	$$\lim_{n\to \infty} \int_{K_n} Q(dd^cV_{K_n,Q}^*)^d= \int_KQ(dd^cV_{K,Q}^*)^d$$
	so that, from (\ref{reln}), we have (\ref{wkn}).

\end{proof}

Given a compact set $K$, let $w$ be an admissible weight function on $K_{n}$ for $n\geq n_0$ whose restriction to $K$ is admissible. For a real-valued, continuous function $u$ on $K_{n_0}$, we consider the weight $w_t(z):=w(z)\exp(-tu(z)),$ $t\in\R,$ and we let $\{\mu_n\}$ be a sequence of probability measures with $\mu_n$ supported on $K_n$. Define
\begin{equation}\label{fn}
f_n(t):=-{1\over 2 l_n}\log\,{\rm det}(G_n^{\mu_n,w_t}).\end{equation}
Note that only the values of $u$ on $K_n$ are needed to define $G_n^{\mu_n,w_t}$ and hence $f_n(t)$. Also note that $f_n(0)= -{1\over 2 l_n}\log\,{\rm det}(G_n^{\mu_n,w})$. We have the following (see Lemma 6.4 in \cite{BBnew}).

\begin{lemma}\label{1stderiv}We have
\[ f_n'(t)={d+1\over dm_n}\int_{K_n }u(z)B_n^{\mu_n,w_t}(z)d\mu_n.\]
In particular, 
$$f_n'(0)={d+1\over dm_n}\int_{K_n} u(z)B_n^{\mu_n,w}(z)d\mu_n$$
and if $B_n^{\mu_n,w}=m_n$ a.e. $\mu_n$, 
\begin{equation} \label{need?} f_n'(0)= {d+1\over d}\int_{K_n} u(z)d\mu_n.
\end{equation}
\end{lemma}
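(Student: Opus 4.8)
The plan is to compute $f_n'(t)$ by differentiating $\log\det(G_n^{\mu_n,w_t})$ with respect to $t$ via Jacobi's formula, $\frac{d}{dt}\log\det A(t) = \Tr\bigl(A(t)^{-1}\frac{d}{dt}A(t)\bigr)$, applied to $A(t) = G_n^{\mu_n,w_t}$. First I would record the entrywise $t$-dependence: the $(i,j)$ entry of $G_n^{\mu_n,w_t}$ is $\int_{K_n}\overline{e_i(z)}e_j(z)w_t(z)^{2n}\,d\mu_n$, and since $w_t(z)^{2n} = w(z)^{2n}e^{-2ntu(z)}$, we get $\frac{d}{dt}(w_t(z)^{2n}) = -2nu(z)\,w_t(z)^{2n}$. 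Hence $\frac{d}{dt}G_n^{\mu_n,w_t}$ has $(i,j)$ entry $-2n\int_{K_n}\overline{e_i(z)}e_j(z)u(z)w_t(z)^{2n}\,d\mu_n$, i.e.\ it is the "$u$-twisted" Gram matrix scaled by $-2n$.

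Next I would identify $\Tr\bigl((G_n^{\mu_n,w_t})^{-1}\cdot(\text{twisted Gram matrix})\bigr)$ with $\int_{K_n}u(z)B_n^{\mu_n,w_t}(z)\,d\mu_n$. The clean way is to pass to an orthonormal basis $\{q_j^{(n)}\}$ for $\PP_n$ with respect to the weighted inner product $\langle p,q\rangle = \int w_t^{2n}p\bar q\,d\mu_n$ (for the fixed value of $t$), in which $G_n^{\mu_n,w_t}$ — now meaning the Gram matrix with respect to $\{q_j^{(n)}\}$ rather than the monomials — is the identity; the trace is basis-independent under this change of basis (the change-of-basis matrices cancel in $\Tr(A^{-1}B)$), so $\Tr\bigl((G_n^{\mu_n,w_t})^{-1}\frac{d}{dt}G_n^{\mu_n,w_t}\bigr) = -2n\sum_{j=1}^{m_n}\int_{K_n}u(z)|q_j^{(n)}(z)|^2 w_t(z)^{2n}\,d\mu_n = -2n\int_{K_n}u(z)B_n^{\mu_n,w_t}(z)\,d\mu_n$ by the definition (\ref{nthberg}) of the Bergman function. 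Therefore $f_n'(t) = -\frac{1}{2l_n}\cdot(-2n)\int_{K_n}u\,B_n^{\mu_n,w_t}\,d\mu_n = \frac{n}{l_n}\int_{K_n}u\,B_n^{\mu_n,w_t}\,d\mu_n$, and substituting the stated identity $l_n = \frac{d}{d+1}nm_n$ gives $\frac{n}{l_n} = \frac{d+1}{dm_n}$, which is exactly the claimed formula.

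The two special cases are then immediate: setting $t=0$ gives $f_n'(0) = \frac{d+1}{dm_n}\int_{K_n}u\,B_n^{\mu_n,w}\,d\mu_n$, and if $B_n^{\mu_n,w} = m_n$ holds $\mu_n$-a.e., the $m_n$'s cancel to yield (\ref{need?}). I would also note in passing that differentiation under the integral sign is justified because $u$ is continuous hence bounded on the compact set $K_{n_0}\supset K_n$, so $e^{-2ntu(z)}$ and its $t$-derivative are bounded uniformly in $z$ on compact $t$-intervals, and $\mu_n$ is a probability measure.

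The only real subtlety — and the one place to be slightly careful — is the change of basis from monomials to the $w_t^{2n}$-orthonormal basis: one must check that $G_n^{\mu_n,w_t}$ is invertible (equivalently that the monomials are linearly independent in $L^2(w_t^{2n}\,d\mu_n)$), which follows since $\mu_n$ is nonpluripolar's... more precisely, since $\mu_n$ is a probability measure on the nonpluripolar set $K_n$ and $w_t > 0$, no nonzero polynomial can vanish $\mu_n$-a.e.\ — this is where admissibility of $w$ (nonpluripolarity of $\{w>0\}$) and the standing hypothesis that $K_n$ is nonpluripolar enter. Granting that, the computation is a routine application of Jacobi's formula, and the identification with the Bergman function is the natural bookkeeping step; I do not anticipate a genuine obstacle.
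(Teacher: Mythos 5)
Your proof is correct but follows a genuinely different route from the paper. You differentiate $\log\det G_n^{\mu_n,w_t}$ directly via Jacobi's formula $\frac{d}{dt}\log\det A=\Tr(A^{-1}A')$ and then identify $\Tr\bigl((G_n^{\mu_n,w_t})^{-1}\frac{d}{dt}G_n^{\mu_n,w_t}\bigr)$ with $-2n\int u\,B_n^{\mu_n,w_t}\,d\mu_n$ by passing to a $w_t^{2n}$-orthonormal basis and using basis-invariance of the trace; the factor $\frac{n}{l_n}=\frac{d+1}{dm_n}$ then falls out as in the paper. The paper instead works entirely through the integral formulas of Lemma \ref{DetasInt}: it writes $m_n!\det G_n^{\mu_n,w_t}$ as the Vandermonde integral $F_n(t)=\int V e^{-tU}d\mu$, differentiates under the integral sign, de-symmetrizes, and recognizes the inner $(m_n-1)$-fold integral as $B_n^{\mu_n,w_t}\cdot Z_n/m_n$ via (\ref{bneqn}). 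Your argument is shorter and more standard linear algebra; the paper's representation has the advantage that the same object $F_n(t)$ immediately yields the concavity of $f_n$ in Lemma \ref{2ndderiv} by Cauchy--Schwarz. One caveat: your justification of the invertibility of $G_n^{\mu_n,w_t}$ is not right as stated --- a probability measure on a nonpluripolar compact set can perfectly well charge only a pluripolar set (e.g.\ a point mass), in which case nonzero polynomials do vanish $\mu_n$-a.e.\ and the Gram matrix is singular. The paper silently ignores this degenerate case too; it is harmless in the intended application because the hypothesis (\ref{fnhyp}) forces $\det G_n^{\mu_n,w}>0$ for large $n$, and then $\det G_n^{\mu_n,w_t}>0$ for all $t$ since $e^{-2ntu}$ is bounded above and below on $K_{n_0}$. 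You should either add that reduction or simply assume $\det G_n^{\mu_n,w_t}>0$, rather than appeal to nonpluripolarity of $K_n$.
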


Before we give the proof, an illustrative example can be given if $\mu_n:= {1\over m_n}\sum_{j=1}^{m_n} \delta_{x_j}$. It is easy to see that $B_n^{\mu_n,w}(x_j) = m_n$ for $j=1,...,m_n$ so 
$$\log{\rm det}(G_n^{\mu_n,w_t}) $$
$$=\log\bigl( |W(x_1,...,x_{m_n})|^2e^{-2ntu(x_1)}\cdots e^{-2ntu(x_{m_n})}\bigr)$$
implies
$${d\over dt}\log{\rm det}(G_n^{\mu_n,w_t})={d\over dt}\bigl(-2tn\sum_{j=1}^{m_n} u(x_j)\bigr)$$
$$=-2n \sum_{j=1}^{m_n} u(x_j)=-2nm_n \int_{K_n} u(z){1\over m_n}B_n^{\mu_n,w}(z)d\mu_n.$$
Recalling that $ l_n= \frac{d}{d+1}nm_n$, this gives
$$-{1\over 2 l_n}\log\,{\rm det}(G_n^{\mu_n,w_t})={d+1\over dm_n}\int_{K_n }u(z)B_n^{\mu_n,w_t}(z)d\mu_n.$$
In this case, ${d\over dt}\log{\rm det}(G_n^{\mu_n,w_t})$ is a constant, independent of $t$; hence 
${d^2\over dt^2}\log{\rm det}(G_n^{\mu_n,w_t})\equiv 0$ -- see Lemma \ref{2ndderiv}.

\begin{proof} The proof we offer here is modified from \cite{BBLopt} and is 
based on the integral formulas of Lemma \ref{DetasInt}.

By  (\ref{gneqn}) we may write
\[ f_n(t)=-{1\over 2 l_n}\log(F_n)+{1\over 2 l_n}\log(m_n!)\]
where  
\[F_n(t):=\int_{K_n^{m_n}}V\exp(-tU)d\mu\]
and
$$
V:=V(z_1,z_2,\cdots,z_{m_n})=|VDM(z_1,\cdots,z_{m_n})|^2w(z_1)^{2n}\cdots w(z_{m_n})^{2n},$$
$$U:=U(z_1,z_2,\cdots,z_{m_n})=2n(u(z_1)+\cdots+u(z_{m_n})),$$
$$d\mu:=d\mu_n(z_1)d\mu_n(z_2)\cdots d\mu_n(z_{m_n}).$$
Further, by (\ref{bneqn}) for $w=w_t$ and $\mu=\mu_n,$ we have
\[ B_n^{\mu_n,w_t}(z)\]
\[=\frac{m_n}{Z_n}\int_{K_n^{m_n-1}}
V(z,z_2,z_3,\cdots,z_{m_n})\exp(-tU)d\mu_n(z_2)\cdots d\mu_n(z_{m_n})\]
where 
\[Z_n=Z_n(t):=m_n!\,{\rm det}(G_n^{\mu_n,w_t})=\int_{K_n^{m_n}}V\,\exp(-tU) d\mu.\]
Note that $Z_n(t)=F_n(t).$
Now
\[f_n'(t)=-{1\over 2 l_n}\frac{F_n'(t)}{F_n(t)}\]
and we may compute
$$
F_n'(t)=\int_{K_n^{m_n}} V(-U)\exp(-tU)d\mu_n(z_1)\cdots d\mu_n(z_{m_n})\\
$$
$$= -2n\int_{K_n^{m_n}} (u(z_1)+\cdots+u(z_{m_n}))V\exp(-tU)d\mu_n(z_1)\cdots d\mu_n(z_{m_n}).
$$
Notice that the integrand is symmetric in the variables and hence we
may ``de-symmetrize'' to obtain
$$F_n'(t)$$
$$=-2nm_n\int_{K_n^{m_n}}u(z_1)V(z_1,\cdots,z_{m_n})\exp(-tU)d\mu_n(z_1)\cdots d\mu_n(z_{m_n})$$
so that, integrating in all but the $z_1$ variable, we obtain
\[F_n'(t)=-2nm_n\int_{K_n} u(z)B_n^{\mu_n,w_t}(z)\frac{Z_n}{n}d\mu_n(z).\]
Thus, using the fact that $Z_n(t)=F_n(t),$ we obtain
$$f_n'(t)=\frac{d+1}{dm_n}\int_{K_n} u(z)B_n^{\mu_n,w_t}(z)d\mu_n(z)$$
as claimed.
In particular, $$f_n'(0)={d+1\over dm_n}\int_{K_n} u(z)B_n^{\mu_n,w}(z)d\mu_n$$
and if $B_n^{\mu_n,w}=m_n$ a.e. $\mu_n$, we recover (\ref{need?}):
$$f_n'(0)= {d+1\over d}\int_{K_n} u(z)d\mu_n.$$
 \end{proof}

The next result was proved in a different way in \cite{BBN}, Lemma 2.2, and also
in \cite{BBLW}, Lemma 3.6. We follow \cite{BBLopt}.

\begin{lemma}\label{2ndderiv}
The functions $f_n(t)$ are concave.  
\end{lemma}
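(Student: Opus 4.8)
The plan is to prove concavity of $f_n$ by computing $f_n''(t)$ explicitly and recognizing it, up to a positive constant, as the negative of a variance, hence $\le 0$. The starting point is the representation from the proof of Lemma \ref{1stderiv}: writing
$$F_n(t)=\int_{K_n^{m_n}}V\exp(-tU)\,d\mu,\qquad f_n(t)=-\tfrac{1}{2l_n}\log F_n(t)+\tfrac{1}{2l_n}\log(m_n!),$$
with $V\ge 0$ and $U=2n(u(z_1)+\cdots+u(z_{m_n}))$, we have $F_n'(t)=-\int V U e^{-tU}d\mu$ and $F_n''(t)=\int V U^2 e^{-tU}d\mu$. Therefore
$$f_n''(t)=-\tfrac{1}{2l_n}\,\frac{F_n''F_n-(F_n')^2}{F_n^2}.$$

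The key step is to observe that, since $V e^{-tU}\ge 0$ and $F_n(t)>0$, the probability measure $d\rho_t:=\dfrac{Ve^{-tU}}{F_n(t)}\,d\mu$ on $K_n^{m_n}$ is well-defined, and
$$\frac{F_n''F_n-(F_n')^2}{F_n^2}=\int U^2\,d\rho_t-\Bigl(\int U\,d\rho_t\Bigr)^2=\mathrm{Var}_{\rho_t}(U)\ge 0.$$
Hence $f_n''(t)=-\tfrac{1}{2l_n}\mathrm{Var}_{\rho_t}(U)\le 0$, which gives concavity. One should note for rigor that differentiation under the integral sign is justified: $u$ is continuous on the compact set $K_{n_0}\supset K_n$, so $U$ is bounded on $K_n^{m_n}$, $V$ is bounded there as well, and $\mu=\mu_n^{\otimes m_n}$ is a finite measure, so $Ve^{-tU}$ and its $t$-derivatives $(-U)^kVe^{-tU}$ are dominated uniformly for $t$ in compact intervals; this licenses the formulas for $F_n',F_n''$. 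One also needs $F_n(t)>0$, which holds because $\{z\in K_n:w(z)>0\}$ is nonpluripolar (admissibility of $w$ on $K_n$), so $\mathrm{det}(G_n^{\mu_n,w_t})$ need not itself be positive — but wait, it can vanish; to be safe one phrases concavity as: wherever $F_n(t)>0$ the above computation applies, and if $F_n$ vanishes somewhere then $f_n\equiv +\infty$ on an interval by log-convexity of $F_n$ (Cauchy–Schwarz: $t\mapsto\log F_n(t)$ is convex, being a log of a Laplace-type transform of the positive measure $V\,d\mu$), and in any case $-\tfrac1{2l_n}\log F_n$ is concave as a function into $[-\infty,+\infty)$.

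Actually the cleanest packaging avoids the measure-$\rho_t$ language and the positivity worry simultaneously: $F_n(t)=\int e^{-tU}\,d(V\mu)$ is the two-sided Laplace transform of the finite positive measure $V\,d\mu$, and it is classical (Hölder, or Cauchy–Schwarz applied to $e^{-tU/2}\cdot e^{-sU/2}$) that $\log F_n$ is convex on the interval where it is finite; thus $f_n=-\tfrac{1}{2l_n}\log F_n+\text{const}$ is concave. I expect no serious obstacle here — the only thing to be careful about is the degenerate case where $\mathrm{det}(G_n^{\mu_n,w_t})$ vanishes (so $f_n=+\infty$), and the cleanest fix is to record that $f_n$ takes values in $(-\infty,+\infty]$ and interpret concavity in the usual extended-real sense, or simply to note that for the measures $\mu_n$ of interest in the sequel (optimal measures, or the normalized empirical measures on AAWF arrays) one has $\mathrm{det}(G_n^{\mu_n,w_t})>0$ for all $t$ because $w_t$ is admissible on $K_n$ and $\mu_n$ charges a nonpluripolar set. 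Either way the substance is the variance/Cauchy–Schwarz computation above.
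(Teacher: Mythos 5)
Your proof is correct and follows essentially the same route as the paper: compute $f_n''$, normalize $Ve^{-tU}d\mu$ to a probability measure, and identify $-2l_n f_n''(t)$ as the variance of $U$, which is nonnegative by Cauchy--Schwarz. The extra remarks on differentiation under the integral sign and on positivity of $F_n$ are sensible but not part of the paper's argument, which simply normalizes $V$ by a constant and applies Cauchy--Schwarz.
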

\begin{proof} We show
that $f_n''(t)\le0.$ With the notation used in the proof of Lemma \ref{1stderiv},
\[f_n''(t)={1\over 2 l_n}\frac{(F_n'(t))^2-F_n''(t)}{F_n^2(t)}\]
and
\begin{eqnarray*}
F_n'(t)&=& -\frac{1}{m_n!}\int_{K_n^{m_n}} UV\exp(-tU)d\mu,\\
F_n''(t)&=&\frac{1}{m_n!}\int_{K_n^{m_n}}U^2V\exp(-tU)d\mu.
\end{eqnarray*}
We must show that $(F_n'(t))^2-F_n''(t)\le0.$ Now, for a fixed $t,$ we may
mulitply $V$ by a constant so that
\[\int_{K_n^{m_n}} V\exp(-tU)d\mu=1.\]
Let $d\gamma :=V\exp(-tU)d\mu.$ Then by the above formulas for $F_n'$ and $F_n'',$ we must show
that
\[\int_{K_n^{m_n}}U^2d\gamma \ge \left(\int_{K_n^{m_n}}U\,d\gamma\right)^2,\]
but this is a simple consequence of the Cauchy-Schwarz inequality. \end{proof}

Define 
$$g(t)=-\log(\delta^{w_t}(K))$$
so that $g(0)=-\log(\delta^{w}(K))$.
From the Berman-Boucksom differentiability result in \cite{BBnew} and their Rumely-type formula (cf. Theorem 5.1 in \cite{lev}), we have  
\begin{equation} \label{gp0} g'(0)= {d+1\over d}\int_Ku(z)(dd^cV_{K,Q}^*)^d \end{equation}
(cf., p. 61 of \cite{lev}). Note that for each $n$, $\mu_n$ is a candidate to be an optimal measure of order $n$ for $K_n$ and $w_t$. Thus, if $\mu_n^t$ is an optimal measure of order $n$ for $K_n$ and $w_t$, we have 
$$\det G_n^{\mu_n,w_t} \leq \det G_n^{\mu_n^t,w_t}$$
and, from Proposition \ref{tfd}, using (\ref{diag}) for the weight $w_t$, 
$$\lim_{n\to \infty}  \frac{1}{2l_n}\cdot \log \det G_n^{\mu_n^t,w_t}= \log \delta^{w_t}(K).$$
Thus with 
$$f_n(t):=-\frac{1}{2l_n}\log\,{\rm det}(G_n^{\mu_n,w_t})$$
as in (\ref{fn}),
\begin{equation}\label{fnp1} \liminf f_n(t) \geq g(t) \ \hbox{for all} \ t.\end{equation}
From Lemma \ref{1stderiv}, we have
\begin{equation}\label{fnp0} f_n'(0)={d+1\over dm_n}\int_{K_n} u(z)B_n^{\mu_n,w}(z)d\mu_n.\end{equation}

We state a calculus lemma, Lemma 3.1 from \cite{BBN}.

\begin{lemma}\label{calc}
Let $f_n$ be a sequence of concave functions on $\R$ and let $g$ be a function on $\R$. Suppose
$$\liminf f_n(t) \geq g(t) \ \hbox{for all} \ t \ \hbox{and} \ \lim f_n(0) = g(0)$$
and that $f_n$ and $g$ are differentiable at $0$. Then $\lim f_n'(0) = g'(0)$.
\end{lemma}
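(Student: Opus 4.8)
\textbf{Proof plan for Lemma \ref{calc}.} The statement is a purely real-analytic fact about concave functions, and the plan is to exploit the fact that for a concave function the difference quotients are monotone in the increment, so that one-sided derivatives control the function from both sides. Fix $t>0$. Concavity of $f_n$ gives
$$
f_n'(0) \ \geq\ \frac{f_n(t)-f_n(0)}{t},
$$
since the slope of a chord decreases as we move right. Taking $\liminf$ in $n$ and using $\liminf f_n(t)\geq g(t)$ together with $\lim f_n(0)=g(0)$, we get $\liminf_n f_n'(0) \geq \bigl(g(t)-g(0)\bigr)/t$. Now let $t\downarrow 0$; since $g$ is differentiable at $0$, the right-hand side tends to $g'(0)$, so $\liminf_n f_n'(0)\geq g'(0)$.

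For the reverse inequality, fix $t<0$ and run the same argument on the other side: concavity gives
$$
f_n'(0) \ \leq\ \frac{f_n(t)-f_n(0)}{t}
$$
(note the inequality flips because $t<0$, or equivalently because the chord slope to the left of $0$ exceeds $f_n'(0)$). Taking $\limsup_n$, bounding $f_n(t)$ from below by something converging to $g(t)$ and dividing by the negative number $t$ reverses the inequality again, yielding $\limsup_n f_n'(0) \leq \bigl(g(t)-g(0)\bigr)/t$ for every $t<0$; letting $t\uparrow 0$ and using differentiability of $g$ at $0$ gives $\limsup_n f_n'(0) \leq g'(0)$. Combining the two bounds, $\lim_n f_n'(0)=g'(0)$.

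The one point that needs a little care is the direction of inequalities when dividing a limit relation by $t<0$: one must phrase the hypothesis as $f_n(t)\geq g(t)-\varepsilon$ for $n$ large (for any fixed $\varepsilon>0$), divide by the negative $t$, take $\limsup$, and only then let $\varepsilon\to 0$ and $t\uparrow 0$. There is no real obstacle here; the lemma is elementary and the entire content is the monotonicity of chord slopes for concave functions. (One does not even need $f_n$ differentiable at points other than $0$, nor any uniform control on the $f_n$; differentiability of each $f_n$ at $0$ is used only to make the symbol $f_n'(0)$ meaningful, and in fact the two chord inequalities above already pin down $f_n'(0)$ between the left and right difference quotients.)
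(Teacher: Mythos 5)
Your proof is correct. Note that the paper itself gives no proof of this statement: it is quoted verbatim as Lemma 3.1 of \cite{BBN}, and the argument there is exactly the chord-slope argument you give (for concave $f_n$ the tangent at $0$ majorizes the graph, so $f_n'(0)$ is squeezed between the difference quotients $\bigl(f_n(t)-f_n(0)\bigr)/t$ for $t>0$ and $t<0$, and the hypotheses pass these bounds to $g$). Your handling of the sign flip for $t<0$ and of the $\liminf$/$\limsup$ bookkeeping is accurate, so there is nothing to add.
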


Using Lemma \ref{calc} along with equations (\ref{fnp1}), (\ref{fnp0}) and (\ref{gp0}), we have the following general result.

\begin{theorem}
\label{genres}
Let $K\subset \C^d$ be compact and nonpluripolar; let $\{K_n\}$ be any sequence of compact sets which decrease to $K$; i.e., $K_{n+1}\subset K_n$ for all $K$ and 
$K =\cap_n K_n$; and let $w$ be an admissible weight function on $K_n$ for $n\geq n_0$ whose restriction to $K$ is admissible. Let $\{\mu_n\}$ be a sequence of probability measures on $K_n$ with the property that 
\begin{equation}\label{fnhyp}
\lim_{n\to \infty}\frac{1}{2l_n}\log\,{\rm det}(G_n^{\mu_n,w})= \log(\delta^{w}(K))
\end{equation}
i.e., $\lim_{n\to \infty}f_n(0)=g(0)$. Then 
\begin{equation}\label{strongasym}
\frac{1}{m_n}B_n^{\mu_n,w} d\mu_n \to \mu_{K,Q}=(dd^cV_{K,Q}^*)^d \ \hbox{weak-}*.
\end{equation}
\end{theorem}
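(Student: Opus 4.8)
\textbf{Proof proposal for Theorem \ref{genres}.}

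The plan is to follow the Berman--Boucksom--Nystrom strategy as adapted in the preceding lemmas, using the variational/differentiability machinery to pin down all weak-$*$ limits of the sequence $\nu_n:=\frac{1}{m_n}B_n^{\mu_n,w}d\mu_n$. First I would record that each $\nu_n$ is a probability measure: by the reproducing-kernel identity $\int_{K_n}B_n^{\mu_n,w}d\mu_n=\sum_j\|w^nq_j^{(n)}\|_{L^2(\mu_n)}^2=m_n$, since $\{q_j^{(n)}\}$ is an orthonormal basis for $\mathcal P_n$ in the weighted $L^2(\mu_n)$ norm. All the $\nu_n$ are supported on $K_{n_0}$ (for $n\ge n_0$), hence the sequence is tight and has weak-$*$ limit points; moreover any such limit point $\nu$ is supported on $\bigcap_{n\ge n_0}K_n=K$ because $K_n\searrow K$. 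It therefore suffices to show that every weak-$*$ limit point $\nu$ of $\{\nu_n\}$ equals $\mu_{K,Q}=(dd^cV_{K,Q}^*)^d$, and since both are probability measures, it is enough to show $\int_K u\,d\nu=\int_K u\,d\mu_{K,Q}$ for every real-valued continuous $u$ on $K_{n_0}$ (such functions separate measures on $K$).

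Fix such a $u$ and a subsequence along which $\nu_n\to\nu$. Form the perturbed weights $w_t=w e^{-tu}$ and the functions $f_n(t)=-\frac{1}{2l_n}\log\det(G_n^{\mu_n,w_t})$ and $g(t)=-\log\delta^{w_t}(K)$ as above. The hypotheses of the calculus lemma, Lemma \ref{calc}, are exactly in place: $f_n$ is concave by Lemma \ref{2ndderiv}; $\liminf_n f_n(t)\ge g(t)$ for every $t$ by (\ref{fnp1}), which itself rests on Proposition \ref{tfd} applied to the weight $w_t$ (legitimate because (\ref{diag}) holds for $w_t$ by Proposition \ref{wDECR}, the sets $K_n$ being unchanged and $w_t$ still admissible on $K_n$ for $n\ge n_0$ with admissible restriction to $K$); and $\lim_n f_n(0)=g(0)$ is precisely the hypothesis (\ref{fnhyp}). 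Both $f_n$ and $g$ are differentiable at $0$ — $f_n$ by Lemma \ref{1stderiv} and $g$ by the Berman--Boucksom differentiability result quoted in (\ref{gp0}). Hence Lemma \ref{calc} gives $\lim_n f_n'(0)=g'(0)$. By Lemma \ref{1stderiv}, $f_n'(0)=\frac{d+1}{dm_n}\int_{K_n}u(z)B_n^{\mu_n,w}(z)d\mu_n=\frac{d+1}{d}\int_{K_n}u\,d\nu_n$, which converges along the subsequence to $\frac{d+1}{d}\int_K u\,d\nu$; by (\ref{gp0}), $g'(0)=\frac{d+1}{d}\int_K u(z)(dd^cV_{K,Q}^*)^d$. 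Cancelling the constant $\frac{d+1}{d}$ yields $\int_K u\,d\nu=\int_K u\,d\mu_{K,Q}$.

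Since this holds for every real continuous $u$ on $K_{n_0}$ (and such $u$ restrict to a measure-determining family on $K$), we conclude $\nu=\mu_{K,Q}$. As every weak-$*$ limit point of $\{\nu_n\}$ equals $\mu_{K,Q}$ and the sequence is tight, $\nu_n\to\mu_{K,Q}$ weak-$*$, which is (\ref{strongasym}). The main obstacle — and the only place where real work beyond bookkeeping is needed — is the verification, already carried out in the excerpt, that the hypotheses feeding Lemma \ref{calc} are robust under the passage from $K$ to the shrinking sequence $\{K_n\}$: concretely, that $\liminf_n f_n(t)\ge g(t)$ continues to hold with the off-$K$ perturbation, which is why Proposition \ref{tfd} and the diagonal convergence $\delta_n^{w_t}(K_n)\to\delta^{w_t}(K)$ of Propositions \ref{DECR}--\ref{wDECR} are the load-bearing inputs; everything after that is the soft compactness-and-differentiation argument sketched here.
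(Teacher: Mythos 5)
Your proposal is correct and follows essentially the same route as the paper: the paper derives Theorem \ref{genres} precisely by combining the calculus lemma (Lemma \ref{calc}) with the concavity of $f_n$ (Lemma \ref{2ndderiv}), the inequality $\liminf_n f_n(t)\ge g(t)$ obtained from Proposition \ref{tfd} together with the diagonal convergence of Propositions \ref{DECR}--\ref{wDECR} applied to the perturbed weight $w_t$, and the derivative formulas (\ref{fnp0}) and (\ref{gp0}). Your additional bookkeeping (that $\frac{1}{m_n}B_n^{\mu_n,w}d\mu_n$ are probability measures, that limit points are supported on $K$, and the passage from $\lim_n f_n'(0)=g'(0)$ for all continuous $u$ to weak-$*$ convergence) correctly fills in the steps the paper leaves implicit.
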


In particular, we get Theorem \ref{maint} on AAWF arrays.

\begin{theorem*}  Let $K\subset \C^d$ be compact and nonpluripolar, let $\{K_n\}$ be a sequence of compact sets which decrease to $K$; i.e., $K_{n+1}\subset K_n$ for all $K$ and 
$K =\cap_n K_n$, and let $w$ be an admissible weight function on $K_n$ for $n\geq n_0$ whose restriction to $K$ is admissible. For each $n$, take points $x_1^{(n)},x_2^{(n)},\cdots,x_{m_n}^{(n)}\in K_n$ for which 
\begin{equation}\label{wam}
 \lim_{n\to \infty}\bigl[|VDM(x_1^{(n)},\cdots,x_{m_n}^{(n)})|w(x_1^{(n)})^nw(x_2^{(n)})^n\cdots w(x_{m_n}^{(n)})^n\bigr]^{\frac{1}{l_n}}=\delta^w(K)
\end{equation}
and let $\mu_n:= \frac{1}{m_n}\sum_{j=1}^{m_n} \delta_{x_j^{(n)}}$. Then
$$
\mu_n \to \mu_{K,Q} \ \hbox{weak}-*.
$$
\end{theorem*}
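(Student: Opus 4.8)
The plan is to deduce the final theorem (the restatement of Theorem~\ref{maint}) directly from Theorem~\ref{genres} by specializing to the discrete measures $\mu_n = \frac{1}{m_n}\sum_{j=1}^{m_n}\delta_{x_j^{(n)}}$. First I would observe, exactly as in the illustrative example following Lemma~\ref{1stderiv}, that for such a discrete measure one has $B_n^{\mu_n,w}(x_j^{(n)}) = m_n$ for every $j$, so $B_n^{\mu_n,w} = m_n$ a.e.\ $\mu_n$, and hence $\frac{1}{m_n}B_n^{\mu_n,w}\,d\mu_n = \mu_n$. Therefore the conclusion $\frac{1}{m_n}B_n^{\mu_n,w}\,d\mu_n \to \mu_{K,Q}$ of Theorem~\ref{genres} becomes precisely $\mu_n \to \mu_{K,Q}$ weak-$*$, which is the desired statement.

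It then remains only to verify that the hypotheses of Theorem~\ref{genres} hold. The decreasing sequence $\{K_n\}$, the nonpluripolar compact $K$, and the admissibility of $w$ on $K_n$ and on $K$ are assumed verbatim in both statements, so nothing is needed there. The one substantive point is to check hypothesis (\ref{fnhyp}), i.e.\ that $\lim_{n\to\infty}\frac{1}{2l_n}\log\det(G_n^{\mu_n,w}) = \log\delta^w(K)$. For the discrete measure $\mu_n$ built from the points $x_j^{(n)}$, the formula (\ref{gneqn}) of Lemma~\ref{DetasInt} (or the direct computation in the proof of Proposition~\ref{tfd}, equation following (\ref{48})) gives
\[
\det(G_n^{\mu_n,w}) = \frac{1}{m_n^{m_n}}\,|VDM(x_1^{(n)},\dots,x_{m_n}^{(n)})|^2\,w(x_1^{(n)})^{2n}\cdots w(x_{m_n}^{(n)})^{2n}.
\]
Taking $\frac{1}{2l_n}\log$ of both sides, the factor $\frac{1}{m_n^{m_n}}$ contributes $-\frac{m_n\log m_n}{2l_n}$, which tends to $0$ since $l_n = \frac{d}{d+1}n m_n$ grows faster than $m_n\log m_n$; and the remaining term is exactly $\log$ of the quantity in (\ref{wam}) raised to the power $1/l_n$, which by the hypothesis (\ref{wam}) of the theorem converges to $\log\delta^w(K)$. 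Hence (\ref{fnhyp}) holds.

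I should also note that invoking Theorem~\ref{genres} tacitly requires the diagonal limit (\ref{diag}), $\lim_n \delta_n^w(K_n) = \delta^w(K)$; but this is supplied unconditionally by Proposition~\ref{wDECR} under precisely the present hypotheses on $\{K_n\}$ and $w$, so it is automatic. Putting these pieces together—(\ref{fnhyp}) verified, $B_n^{\mu_n,w}=m_n$ a.e.\ $\mu_n$, and Theorem~\ref{genres}—yields $\mu_n \to \mu_{K,Q}$ weak-$*$. I do not anticipate any real obstacle here: this final theorem is genuinely a corollary, and the only mild bookkeeping is confirming that the combinatorial prefactor $m_n^{-m_n}$ washes out in the $\frac{1}{2l_n}\log$ normalization, which is immediate from $l_n = \frac{d}{d+1}nm_n$.
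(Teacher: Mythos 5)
Your proposal is correct and follows essentially the same route as the paper's own proof: invoke Theorem \ref{genres} after noting that $B_n^{\mu_n,w}(x_j^{(n)})=m_n$ for all $j$, so that $\frac{1}{m_n}B_n^{\mu_n,w}\,d\mu_n=\mu_n$. Your explicit verification that the hypothesis (\ref{wam}) implies (\ref{fnhyp}) via $\det(G_n^{\mu_n,w})=m_n^{-m_n}|VDM|^2w^{2n}\cdots w^{2n}$, with the prefactor vanishing in the $\frac{1}{2l_n}\log$ normalization, is a detail the paper leaves implicit, and is handled correctly.
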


\begin{proof} As observed before the proof of Lemma \ref{1stderiv}, we have $B_n^{\mu_n,w}(x_j^{(n)})=m_n$ for $j=1,...,m_n$ and hence a.e. $\mu_n$ on $K_n$. Hence the result follows immediately from Theorem \ref{genres}, specifically, equation (\ref{strongasym}). Alternately, if $\mu_n^t$ is an optimal measure of order $n$ for $K_n$ and $w_t$, we have 
$$\det G_n^{\mu_n,w_t} \leq \det G_n^{\mu_n^t,w_t}$$
and hence
$$\liminf f_n(t) \geq g(t) \ \hbox{for all} \ t;$$
finally, by hypothesis,
$$\lim_{n\to\infty}f_n(0)=-\log(\delta^{w}(K))=g(0).$$
Thus Lemma \ref{calc} is valid to show $\mu_n\to \mu_{K,Q}$ weak-*. Indeed, in this case, as observed earlier, the functions $f_n(t)$ are affine in $t$ so that $f_n''(t)=0$ is immediate and Lemma \ref{2ndderiv} is unnecessary.
\end{proof}


\end{document}